\documentclass{raex}
\usepackage[T1]{fontenc}

\usepackage{amsmath, amssymb, amsthm, amsfonts, amsbsy}
\usepackage{mathrsfs}
\usepackage{graphicx}
\usepackage{tikz}
\usepackage{enumitem, lineno}
\usepackage{makeidx}
\usepackage{textcomp}
\usepackage{comment}
\usepackage{diagmac2}
\usepackage{color}
\usepackage{cite}

\usetikzlibrary{
    positioning,
    calc,
    trees,
    decorations.markings,
    patterns,
    arrows,
    arrows.meta
}

\usepackage[
    colorlinks=true,
    linkcolor=red,
    citecolor=cyan,
    urlcolor=blue,
    breaklinks=true
]{hyperref}

 \begin{Author}
	\FirstName{Oleksiy}\LastName{Dovgoshey}
	\PostalAddress{Department of Function Theory, Institute of Applied Mathematics and Mechanics of NASU, Cherkasy, 18031, Ukraine;\\
Department of Mathematics and Statistics, University of Turku, Turku, 20014, Finland}
	\Email{oleksiy.dovgoshey@gmail.com, oleksiy.dovgoshey@utu.fi }
\end{Author}

\begin{Author}
	\FirstName{Navjay Singh}\LastName{Jaiswal}
	\PostalAddress{Department of Mathematics, IIT, Gandhinagar, 382055, India}
	\Email{navjayjaiswal@iitgn.ac.in}
\end{Author}

\begin{Author}
	\FirstName{Surinder Pal Singh}\LastName{Kainth$^*$}

	\PostalAddress{Department of Mathematics, Panjab University, Chandigarh, 160014, India}
	\Email{sps@pu.ac.in}
\end{Author}

\begin{Author}
	\FirstName{Gursimar}\LastName{Kaur}
	\PostalAddress{Department of Mathematics, Panjab University, Chandigarh, 160014, India}
	\Email{asbhangu143@gmail.com}
\end{Author}

 \begin{Author}
	\FirstName{Olga}\LastName{Rovenska}
	\PostalAddress{Department of Mathematics, Applied Mathematics and Physics, Donbas State Engineering Academy,  Kramatorsk, 84313, Ukraine}
	\Email{rovenskaya.olga.math@gmail.com}
\end{Author}

\begin{MathReviews}
	\primary{54E35}
    \secondary{05C05}
\end{MathReviews}

\begin{KeyWords}
	\keyword{Center of distances} 
		\keyword{finite ultrametric space} 
           \keyword{perfect binary tree}
\end{KeyWords}

 \newtheorem{thm}{Theorem}[section]
  \newtheorem{defn}[thm]{Definition}
 \newtheorem{cor}[thm]{Corollary}
 \newtheorem{lem}[thm]{Lemma} 
 \newtheorem{prop}[thm]{Proposition} 
 \newtheorem{prob}[thm]{Problem}
  \newtheorem{con}[thm]{Conjecture}

 \theoremstyle{definition} 

 \theoremstyle{remark}
 \newtheorem{rem}[thm]{Remark}

 \numberwithin{equation}{section}

\newtheorem{example}[thm]{Example}

\begin{document}\large
\setlist[enumerate,1]{label={(\roman*)}}

\title{Maximal Center of Distances of Finite Ultrametric Spaces and Perfect Binary Trees}

\maketitle

 \begin{abstract} 
We investigate the finite ultrametric spaces $(X,d)$ that have a given cardinality of the center of distances and a minimal cardinality of the set $X$. It is shown that such spaces are isometric if and only if their centers of distances are the same.
The representing trees of these spaces are characterized up to isomorphism.
\end{abstract}




\section{Introduction}

The concept of the center of distances was introduced by Wojciech Bielaś, Szymon Plewik, and Marta Walczyńska in  paper \cite{BPW2018} as follows.
\begin{defn}\label{zcgh572}
Let $(X,d)$ be a metric space and let $D(X)$ be the distance set of $(X,d)$,
\begin{equation*}
D(X) := \{ d(x,y) : x,y \in X \}.
\end{equation*}
The {\it center of distances} $C(X)$  of $(X,d)$ is the set of all $t\in D(X)$ such that for each $p\in X$  there exists $x\in X$ with $d(p,x)=t$.
\end{defn}

In \cite{BPW2018}, the concept of the center of distances was used to generalize the theorem of John von Neumann on permutations of two sequences having the same set of cluster points.
The center of distances has turned out to be an effective tool for solving a number of interesting problems connected with metric spaces (see, e.g., papers
\cite{Bartoszewicz2018,Banakiewicz2022,Banakiewicz2023,Kula2025,NowakowskiPrusWisniowski2026,Bartoszewicz2024}).

In the case of ultrametric spaces, the structure of the center of distances was recently studied in \cite{DR2026MDPI} and \cite{DovgosheyRovenska2026}.

The following theorem was  proved in \cite{DovgosheyRovenska2026}.

\begin{thm}\label{ter1-1}
Let $(X,d)$ be a finite ultrametric space and let $n:=|X|$.
Then the inequality
\begin{equation*}
|C(X)| \leq 1 + \lfloor \log_2 (n) \rfloor
\end{equation*}
holds, where  $\log_2(n)$ is the binary logarithm of $n$ and  $\lfloor \log_2(n) \rfloor$ is the integer part of $\log_2(n)$. 
Moreover, for every integer $n\geq 1$ there exists an ultrametric space $(Y,\rho)$ such that $|Y|=n$
and
\begin{equation*}
|C(Y)| = 1 + \lfloor \log_2(n) \rfloor .
\end{equation*}
\end{thm}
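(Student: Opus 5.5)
The plan is to prove the inequality by induction on $n=|X|$, using the standard ball structure of a finite ultrametric space. For $n=1$ we have $D(X)=\{0\}=C(X)$, so $|C(X)|=1=1+\lfloor\log_2 1\rfloor$.

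For $n\ge 2$, let $t_0:=\operatorname{diam} X>0$. The relation $x\sim y \iff d(x,y)<t_0$ is an equivalence relation, because $d$ is an ultrametric. Its classes $B_1,\dots,B_k$ are the maximal proper open balls, and $k\ge 2$. Two facts about these balls will be used.
\begin{itemize}
\item Points in different classes are at distance exactly $t_0$.
\item Within a class, all distances are less than $t_0$.
\end{itemize}

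The key step is the following claim: for every $i$,
\begin{equation*}
C(X)\setminus\{t_0\}\subseteq C(B_i).
\end{equation*}
To see this, take $t\in C(X)$ with $t\ne t_0$ and a point $p\in B_i$. There is $x\in X$ with $d(p,x)=t$. If $t=0$, then $x=p\in B_i$. If $0<t$, then $t<t_0$, so $x\sim p$ and hence $x\in B_i$. Also $t\in D(B_i)$ follows from this. Therefore $t\in C(B_i)$.

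Now choose $i$ with $|B_i|$ minimal. Since $k\ge 2$, we have $|B_i|\le \lfloor n/2\rfloor$. The induction hypothesis gives
\begin{equation*}
|C(B_i)|\le 1+\lfloor \log_2 |B_i|\rfloor \le 1+\lfloor\log_2 (n/2)\rfloor = \lfloor \log_2 n\rfloor .
\end{equation*}
It follows that
\begin{equation*}
|C(X)|\le 1+|C(B_i)|\le 1+\lfloor\log_2 n\rfloor .
\end{equation*}
The one delicate point is this choice of the \emph{smallest} ball; a larger ball would not give the bound.

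For the existence part, write $m:=\lfloor\log_2 n\rfloor$, so that $2^m\le n<2^{m+1}$. The construction is built up in three steps.
\begin{enumerate}
\item Start with the perfect binary model. Take $\{0,1\}^m$ with
\begin{equation*}
d(x,y)=\max\{\,j: x_j\ne y_j\,\}
\end{equation*}
(and $d(x,x)=0$). Every point has neighbours at every distance $0,1,\dots,m$, so $C=\{0,\dots,m\}$ has $m+1$ elements.
\item Add the extra $n-2^m$ points without destroying the center. Define a map $\pi\colon Y\to\{0,1\}^m$ with every fibre of size $1$ or $2$; this is possible because $n<2^{m+1}$. Set
\begin{equation*}
\rho(y,z)=d(\pi y,\pi z)\ \text{ if } \pi y\ne\pi z,\qquad \rho(y,z)=\tfrac12\ \text{ if } \pi y=\pi z,\ y\ne z .
\end{equation*}
\item Verify the center. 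Every point still realises each distance $1,\dots,m$, and also $0$, so $\{0,1,\dots,m\}\subseteq C(Y)$ and $|C(Y)|\ge m+1$. The first part gives the reverse inequality.
\end{enumerate}

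The main obstacle is checking that $\rho$ is an ultrametric. This holds because collapsing each fibre to one point gives $d$, and within a fibre the distance $\tfrac12$ is smaller than all other positive distances.
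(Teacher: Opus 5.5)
Your proof is correct. The paper does not prove Theorem~\ref{ter1-1} itself; it imports it from \cite{DovgosheyRovenska2026}. Your argument for the inequality is nonetheless the same mechanism the paper records in Lemmas~\ref{suul} and~\ref{kvcr} and reuses in the proof of Lemma~\ref{UT2Lemma3}(i):
\begin{itemize}
\item your classes $B_1,\dots,B_k$ are the parts of the diametrical graph $G_X$;
\item your inclusion $C(X)\setminus\{\operatorname{diam}X\}\subseteq C(B_i)$ is the ``$\subseteq$'' half of Lemma~\ref{suul};
\item passing to a smallest part with $|B_i|\le n/2$ is Lemma~\ref{kvcr} combined with the induction hypothesis.
\end{itemize}

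The paper gives no proof of the existence half, so there is nothing to compare it with directly. Your model $\{0,1\}^m$ with $d(x,y)=\max\{j:x_j\neq y_j\}$ is the ${\bf MCD}$-space whose representing tree is the perfect binary tree with level-constant labels from Theorem~\ref{mmre}, namely Proposition~\ref{UT2Cor8} with $A=\{0,1,\dots,m\}$. Your fibre-doubling with the new distance $\tfrac12$ is a clean way to reach every $n$ with $2^m\le n<2^{m+1}$.

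The ultrametric check you flag as the main obstacle is short. Note that $\rho(y,z)\ge d(\pi y,\pi z)$ for all $y,z$; this settles the case $\pi y\neq\pi w$. If $\pi y=\pi w$ and $y\neq w$, then every $z$ differs from at least one of $y,w$. Hence $\max\{\rho(y,z),\rho(z,w)\}>0$, and every positive value of $\rho$ is at least $\tfrac12$.

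You should also state explicitly that $\rho(y,y)=0$ and that $\pi$ is surjective. Surjectivity is what guarantees that each point realises every distance $1,\dots,m$.
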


Theorem \ref{ter1-1} implies the following corollary.
\begin{cor}
The inequality
\[
|X| \geq 2^{|C(X)|-1}
\]
holds for each finite ultrametric space $(X,d)$.
\end{cor}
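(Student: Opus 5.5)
The corollary follows directly from the inequality of Theorem~\ref{ter1-1}; the plan is to invert the logarithmic bound. Let $(X,d)$ be a finite ultrametric space. First I would dispose of the degenerate case $X=\varnothing$. Under Definition~\ref{zcgh572}, $D(\varnothing)=\varnothing$, so $C(\varnothing)=\varnothing$ and $|C(X)|=0$. The claimed inequality then reads $0\geq 2^{-1}$, which is false. So the statement must be understood for nonempty $X$, as is implicit in Theorem~\ref{ter1-1}, where $\log_2(n)$ requires $n\geq 1$. From now on I assume $n:=|X|\geq 1$.

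Theorem~\ref{ter1-1} gives $|C(X)|\leq 1+\lfloor \log_2(n)\rfloor$, that is,
\[
|C(X)|-1\leq \lfloor \log_2(n)\rfloor .
\]
Next I use $\lfloor \log_2(n)\rfloor\leq \log_2(n)$ and the fact that $t\mapsto 2^t$ is increasing on $\mathbb{R}$. Together these give
\[
2^{|C(X)|-1}\leq 2^{\lfloor \log_2(n)\rfloor}\leq 2^{\log_2(n)}=n=|X|.
\]
This is exactly the required inequality. It holds even if $|C(X)|=0$, where it reduces to $2^{-1}\leq n$.

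There is no real obstacle here. The only points needing care are the empty-space convention and checking that monotonicity of $2^t$ is applied to real exponents, so that the floor function causes no trouble. I would also note that the bound is sharp: the second part of Theorem~\ref{ter1-1}, applied with $n=2^k$, gives a space $Y$ with $|Y|=2^k$ and $|C(Y)|=k+1$. This sharpness motivates the later study of the spaces attaining equality.
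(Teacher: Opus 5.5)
Your argument is correct and is the same derivation the paper intends: it states the corollary as an immediate consequence of Theorem~\ref{ter1-1}, obtained by exponentiating $|C(X)|-1\le\lfloor\log_2|X|\rfloor\le\log_2|X|$. Your treatment of $X=\varnothing$ is unnecessary, since Definition~\ref{ganb} already requires the underlying set of a metric space to be non-empty.
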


The present paper is motivated by the following problem.

\begin{prob}\label{qwed}
Let $\bf MCD$ (maximal center of distances) denote the class of all finite ultrametric spaces $(X,d)$ satisfying the equality
\begin{equation*}
|X| = 2^{|C(X)|-1}. 
\end{equation*}
Describe the $\bf MCD$-spaces up to isometry.
\end{prob}

It was shown by Vladimir~Gurvich and Mikhail~Vyalyi that any finite ultrametric space is isometrically describable by a rooted tree equipped with a specific labeling of the vertex set \cite{GV2012DAM}.
 The corresponding geometric interpretation of the
Gurvich-Vyalyi representation was obtained in  \cite{PD2014JMS} and it provides a framework for addressing various
extremal problems related to finite ultrametric spaces \cite{DPT2015,DPT2017FPTA,DP2020pNUAA}. Analogues of the
Gurvich-Vyalyi representation and its geometric interpretation have recently been
obtained for totally bounded ultrametric spaces \cite{Dovg2025}.

The present paper is organized as follows. Section~\ref{seq2} contains some definitions and facts from the theory of ultrametric spaces and graph theory. Section~\ref{seq3} collects several auxiliary lemmas.

The main results of the paper are proved in Section \ref{seq4}.
The structure of representing trees of {\bf MCD}-spaces is described in Theorem~\ref{mmre} up to isomorphisms of labeled rooted trees. The necessary and sufficient conditions under which {\bf MCD}-spaces are isometric are given in Theorem~\ref{m0}.

In the final Section \ref{seq5}, we formulate some conjectures and open problems related to ${\bf MCD}$-spaces and finite homogeneous ultrametric spaces.

\section{Preliminaries}\label{seq2}

 In what follows we denote by $\mathbb{R}^{+}$ the set $[0,\infty)$,
\(
\mathbb{R}^{+}:=[0,\infty).
\)

\begin{defn}\label{ganb}
Let $X$ be a non-empty set. A {\it metric} on  $X$ is a function $d : X \times X \to \mathbb{R}^+$,  such that for all $x,y,z \in X$:
\begin{enumerate}
\item[{\it(i)}] $d(x,y)=d(y,x)$,
\item[{\it(ii)}]  $d(x,y)=0 \iff x=y$,
\item[{\it(iii)}]  $d(x,y)\le d(x,z)+d(z,y)$.
\end{enumerate}
\end{defn}

If the inequality
\begin{equation}
    \label{ddd}
d(x,y)\le \max \{d(x,z),d(z,y)\}
\end{equation}
holds instead of $(iii)$, then $d : X \times X \to \mathbb{R}^+$ is called an {\it ultrametric} on $X$.

Inequality \eqref{ddd} is often called the {\it strong triangle inequality}. See, for example, book \cite[p. 40]{Surinder}.

Let $\mathrm{S} $ be a non-empty subset of a metric space $(X,d)$. The quantity
\[
\operatorname{diam}\mathrm{S}
:= \sup\{\, d(x,y) : x,y \in \mathrm{S} \,\}
\]
is called the {\it diameter} of $\mathrm{S}$.

\begin{prop}\label{xxtt}
 Let $S$ be a non-empty subset of an
ultrametric space $(X,d)$. Then the equality
\[
\operatorname{diam} S=\sup\{d(p,s):s\in \mathrm{S}\}
\]
holds for each $p\in \mathrm{S}$.
\end{prop}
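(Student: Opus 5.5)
We prove the equality by showing two inequalities between $\operatorname{diam} S$ and the quantity
\[
r_p:=\sup\{d(p,s): s\in S\},
\]
where $p\in S$ is fixed but arbitrary. Both quantities may be $+\infty$, and the argument below treats that case uniformly.

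\emph{The inequality $r_p\le \operatorname{diam} S$.} This needs no ultrametric assumption. For each $s\in S$ the number $d(p,s)$ belongs to $\{d(x,y): x,y\in S\}$, so $d(p,s)\le \operatorname{diam} S$. Taking the supremum over $s\in S$ gives $r_p\le\operatorname{diam} S$.

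\emph{The inequality $\operatorname{diam} S\le r_p$.} This is where the strong triangle inequality \eqref{ddd} is used, with $z=p$ as the intermediate point. Take arbitrary $x,y\in S$. Then
\[
d(x,y)\le \max\{d(x,p),d(p,y)\}.
\]
By the symmetry axiom (i) of Definition~\ref{ganb}, $d(x,p)=d(p,x)$. Both $d(p,x)$ and $d(p,y)$ are at most $r_p$, so $d(x,y)\le r_p$. Since $x,y\in S$ were arbitrary, taking the supremum over all pairs gives $\operatorname{diam} S\le r_p$.

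Combining the two inequalities yields $\operatorname{diam} S=r_p$ for every $p\in S$, which is the claimed equality. The argument has no real obstacle. The one point to notice is that in a general metric space the same reasoning only gives $\operatorname{diam} S\le 2r_p$; the strong triangle inequality is exactly what removes the factor $2$.
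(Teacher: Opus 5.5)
Your proof is correct. The upper bound $\sup\{d(p,s):s\in S\}\le\operatorname{diam}S$ combined with the strong triangle inequality, taking $z=p$ as the intermediate point, is the standard argument for this fact; the paper does not prove Proposition~\ref{xxtt} itself and only cites Lemma~1 of \cite{DR2026MDPI}.
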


For the proof of Proposition \ref{xxtt} see, for example, Lemma 1 in \cite{DR2026MDPI}.

Let $(X,d)$ and $(Y,\rho)$ be metric spaces. A mapping 
\(
\Phi : X \to Y
\)
is called an {\it isometry} of $(X,d)$ and $(Y,\rho)$ if this mapping is bijective and the equality
\[
d(x,y)=\rho\bigl(\Phi(x),\Phi(y)\bigr)
\]
holds for all $x,y\in X$.
We say that $(X,d)$ and $(Y,\rho)$ are {\it isometric} if there exists an isometry $X \to Y$.

Let us turn now to the graph theory.
  Recall that a {\it graph} is a pair $(V,E)$ consisting of a non-empty set $V$ and a (possibly empty) set $E$ whose elements are unordered pairs of distinct points from $V$.
For a graph $G=(V,E)$, the sets $V=V(G)$ and $E=E(G)$ are called the {\it set of vertices} and the {\it set of edges}, respectively. If $x$ and $y$ are vertices of $G$ and $\{x,y\}\in E(G)$, then we say that $x$ and $y$ are {\it adjacent} in $G$. A graph $G$ is said to be {\it empty} if the equality $E(G)=\emptyset$ holds. 
A graph $G$ is said to be {\it finite} if this graph contains a finite number of vertices, $|V(G)| < \infty$.
A graph $H$ is called a {\it subgraph} of a graph $G$, $H \subseteq G$, if 
\[
V(H) \subseteq V(G) \quad \text{and} \quad E(H) \subseteq E(G).
\]

\begin{defn}
    \label{nnvv}
 Let $G$ be a graph and let $V'$ be a non-empty
subset of $V(G)$. A subgraph of $G$ is called the subgraph
induced by $V'$ and denoted by $G[V']$ if the equality
\(
V(G[V']) = V'
\)
holds, and the equivalence
\[
\left(\{u',v'\} \in E(G[V']) \right) \iff
\left(\{u',v'\} \in E(G)\right)
\]
is valid for all $u',v' \in V'$.
\end{defn}

Let $v$ be a vertex of a graph $G$. The cardinal number of the set
\[
\{u \in V(G) : \{u,v\} \in E(G)\}
\]
is called the {\it degree} of $v$ and denoted by $\delta_G(v)$,
\begin{equation}
    \label{mmks}
\delta_G(v) := \left|\{u \in V(G) : \{u,v\} \in E(G)\}\right|.
\end{equation}

A {\it path} is a graph $P$ of the form
\begin{equation}
    \label{gasl}
V(P)=\{x_0,x_1,\dots,x_k\}, \quad E(P)=\{\{x_0,x_1\},\dots,\{x_{k-1},x_k\}\},
\end{equation}
where all $x_i$ are distinct and $k \geq 1$. If \eqref{gasl} holds, then we write $P=P_{x_0,x_k}$ and say that $P$ is a path joining the vertices $x_0$ and  $x_k$.

\begin{defn}\label{miy}
A graph $G$ is called {\it connected} if for every two distinct vertices
$u,v\in V(G)$ there exists a path
\(
P_{u,v}\subseteq G.
\)
\end{defn}

A connected subgraph $H$ of a graph $G$ is  a {\it component} of $G$ if the induced subgraph 
\(
G\bigl[V(H)\cup\{v\}\bigr]
\)
is not connected whenever
\(
v\in V(G)\setminus V(H).
\)

A graph $C$ is called a {\it cycle} if $|V(C)|\ge 3$ and there exists an enumeration $v_1,v_2,\dots,v_n$ of its vertices such that
\[
\{v_i,v_j\}\in E(C) \iff (|i-j|=1 \text{ or } |i-j|=n-1).
\]

\begin{defn}\label{avct}
    A connected acyclic graph is called a {\it tree}.
\end{defn}

The next proposition describes a characteristic property of trees.
See, for example, Theorem 1.5.1   in~\cite{Diestel2017}.

\begin{prop}\label{hfdsxbj} Let $G$ be a finite  graph. Then the following statements are equivalent.
\begin{enumerate}
\item[(i)] $G$ is a tree.
\item[(ii)] Any two distinct vertices of $G$ are joined by unique path in $G$.
\end{enumerate}
\end{prop}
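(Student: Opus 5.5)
We prove the two implications (i)$\Rightarrow$(ii) and (ii)$\Rightarrow$(i) separately; both rest on the finiteness of $G$ and on Definitions \ref{miy} and \ref{avct}.

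\emph{(i)$\Rightarrow$(ii).} Let $G$ be a tree and let $u,v$ be distinct vertices. Since $G$ is connected, there is at least one path $P_{u,v}\subseteq G$. For uniqueness we argue by contradiction. Suppose $P=P_{u,v}$ with vertices $u=x_0,\dots,x_k=v$, and $Q=P_{u,v}$ with vertices $u=y_0,\dots,y_m=v$, are two distinct paths.
\begin{itemize}
\item[1.] Let $i$ be the least index with $x_{i+1}\neq y_{i+1}$. Such an index exists: otherwise one path would be a proper initial segment of the other, which is impossible since both end at $v$ and the vertices of a path are distinct.
\item[2.] Let $j>i$ be the least index such that $x_j$ lies on $Q$, say $x_j=y_l$. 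Such a $j$ exists because $x_k=v$ lies on $Q$. We must have $l>i$: the vertices $y_0,\dots,y_i$ equal $x_0,\dots,x_i$, and the $x$'s are distinct, so $x_j$ cannot be among them.
\item[3.] Form the closed walk $x_i,x_{i+1},\dots,x_j=y_l,y_{l-1},\dots,y_i=x_i$. By the minimality of $j$, its inner vertices are pairwise distinct.
\item[4.] It has at least three vertices. If it had only two, then $j=i+1$ and $l=i+1$, giving $x_{i+1}=y_{i+1}$, which contradicts the choice of $i$.
\end{itemize}
The vertices of this closed walk therefore span a subgraph of $G$ that contains a cycle, contradicting the acyclicity of $G$.

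\emph{(ii)$\Rightarrow$(i).} Connectedness follows at once from (ii). Suppose $G$ contains a cycle $C$ with enumeration $v_1,\dots,v_n$, $n\ge3$. Then the edge $\{v_1,v_2\}$ alone is a path joining $v_1$ and $v_2$. The path $v_1,v_n,v_{n-1},\dots,v_2$ also joins them, and it is distinct from the first because it has $n-1\ge2$ edges. This contradicts the uniqueness in (ii), so $G$ is acyclic and hence a tree.

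The main obstacle is the extraction of a genuine cycle from two distinct paths in (i)$\Rightarrow$(ii). A cycle must consist of distinct vertices and have at least three of them. The minimality choices of $i$ and $j$ (the first divergence, then the first return to $Q$) are what guarantee both properties.
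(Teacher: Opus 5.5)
Your proof is correct. The paper gives no argument for this proposition and simply cites Theorem~1.5.1 of Diestel's book, so the comparison is between a citation and your self-contained proof.

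Your route is the standard one. For (i)$\Rightarrow$(ii) you extract a cycle from two distinct $u$--$v$ paths, using the first divergence index $i$ and the first return $x_j=y_l$ of $P$ to $Q$. For (ii)$\Rightarrow$(i) you observe that a cycle gives two distinct paths, of lengths $1$ and $n-1\ge 2$, between the adjacent vertices $v_1,v_2$.

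What your version adds over the citation is that it is checked directly against the paper's own conventions. Paths are subgraphs with $k\ge 1$, so ``distinct paths'' means distinct subgraphs; you correctly reduce this to distinct vertex sequences starting at $u$. A cycle is a graph on $n\ge 3$ vertices whose edge set is exactly the consecutive pairs. That is precisely the subgraph formed by the edges of your closed walk, once you know its $j+l-2i\ge 3$ vertices are pairwise distinct.

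Two cosmetic remarks. In step~1, the case where the two vertex sequences coincide entirely (rather than one being a proper initial segment) is excluded only because $P\neq Q$; it would be cleaner to say so explicitly. Also, contrary to your opening sentence, the finiteness of $G$ is never used. Paths are finite by definition, and the equivalence holds for arbitrary graphs.
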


A tree $T$ may have a distinguished vertex $r$  the {\it root} of $T$; in this case $T$ is called a {\it rooted tree} and we write $T=T(r)$.

Let $T=T(r)$ be a rooted tree and let $v\in V(T)$. Similarly \cite{Dov2019pNUAA,DP2018pNUAA,DP2019PNUAA,DP2020pNUAA,DPT2015,DPT2017FPTA}, we will denote by $\delta_{T(r)}^{+}(v)$ the {\it out-degree} of $v$,
\begin{equation}
    \label{zbh1}
\delta_{T(r)}^{+}(v):=
\begin{cases}
\delta_T(v), & \text{if } v=r,\\
\delta_T(v)-1, & \text{if } v\ne r,
\end{cases}
\end{equation}
where $\delta_T(v)$ is the degree of $v$ defined by \eqref{mmks} with $G=T$.

\begin{defn}
    \label{scm}
A vertex $v$ of a rooted tree $T=T(r)$ is called a {\it leaf} of $T(r)$ if the equality $\delta^+_{T(r)}(v)=0$ holds. In what follows we denote by $L(T(r))$ the set of leaves of the rooted tree $T(r)$.
\end{defn}

The vertices of a rooted tree $T=T(r)$ that are not its leaves are called the {\it internal nodes} of $T(r)$.

If $T = T(r)$ is a rooted tree and $v$ is a vertex of $T$ such that $v \neq r$, 
then the {\it level} of $v$ is the number 
\begin{equation}\label{lev}
    \operatorname{lev}_{T(r)}(v):=|E(P_{v,r})|,
\end{equation}
where $P_{v,r}$ is the unique path joining $v$ with $r$.
Moreover, the root $r$ of $T(r)$ has, by definition,  the zero level, 
\begin{equation}
    \label{p1}
    \operatorname{lev}_{T(r)}(r):=0.
\end{equation}

\begin{defn}\label{sqbh}
Let $T=T(r)$ be a rooted tree, and let $v\in V(T)\setminus \{r\}$. A vertex $u$ is called 
the {\it direct predecessor}
of $v$ if $u \in V(P_{v,r})$ and 
\begin{equation*}
\operatorname{lev}_{T(r)}(v)=1+\operatorname{lev}_{T(r)}(u).
\end{equation*}
In this case we say that $v$ is a {\it direct successor}
of $u$.
\end{defn}

Proposition \ref{hfdsxbj} implies that each vertex $v\in V(T)\setminus \{r\}$ has the unique direct predecessor. Moreover, the root $r$ is the only vertex of  $T$ that has no any direct predecessor.

The next theorem directly follows from Theorem 3.38 of \cite[p.~20]{Tutte1966}.

\begin{thm}
    \label{gjl}
Let $T=T(r)$ be a rooted tree with $\delta_T(r)\geq 1$ and let
$T-r$ be the subgraph of $T$ induced by the set
$V(T)\setminus\{r\}$. Then $T-r$ has exactly
$\delta_T(r)$ components and all these components are trees.
\end{thm}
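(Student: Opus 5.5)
The plan is to derive Theorem~\ref{gjl} from Proposition~\ref{hfdsxbj} by a direct argument on the finite tree $T$, viewing $T-r$ as a forest whose components are indexed by the neighbours of $r$. Let $N(r)=\{u\in V(T):\{u,r\}\in E(T)\}$, so that $|N(r)|=\delta_T(r)\ge 1$. The proof has three steps.

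First, every component of $T-r$ is a tree. Each component $H$ of $T-r$ is connected by definition. Any cycle in $H$ would also be a cycle in $T$, because $H\subseteq T-r\subseteq T$. Since $T$ is acyclic, so is $H$, and hence $H$ is a tree by Definition~\ref{avct}.

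Second, every component of $T-r$ contains some $u\in N(r)$. Take $v\in V(T)\setminus\{r\}$ and consider the unique path $P_{v,r}$ given by Proposition~\ref{hfdsxbj}. Its vertex immediately preceding $r$ lies in $N(r)$. Removing $r$ from $P_{v,r}$ leaves a path (or the single vertex $v$) inside $T-r$ joining $v$ to that neighbour. Consequently each vertex of $T-r$ lies in the same component as some element of $N(r)$.

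Third, distinct $u,w\in N(r)$ lie in distinct components. This step is the main obstacle, since it is where uniqueness of paths does the real work. Suppose there were a path $Q\subseteq T-r$ joining $u$ and $w$. Adding the vertex $r$ and the edges $\{u,r\},\{r,w\}$ to $Q$ gives a cycle in $T$, because $r\notin V(Q)$. Equivalently, $T$ would contain two distinct paths from $u$ to $r$: the edge $\{u,r\}$, and $Q$ followed by $\{w,r\}$. Either way Proposition~\ref{hfdsxbj} or acyclicity is contradicted.

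Before concluding, one routine point needs checking: the components of $T-r$ coincide with the classes of the relation "joined by a path in $T-r$". This follows from the maximality in the definition of component. Combining the three steps, the map sending each $u\in N(r)$ to its component is a bijection. Therefore $T-r$ has exactly $\delta_T(r)$ components, all of which are trees.
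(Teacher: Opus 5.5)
Your argument is correct, but it does not follow the paper. The paper gives no proof of its own and only says that Theorem~\ref{gjl} follows directly from Theorem~3.38 in Tutte's book. You instead prove it from scratch by showing that $u\mapsto$ (the component of $T-r$ containing $u$) is a bijection from $N(r)$ onto the set of components. Surjectivity comes from the last edge of a path to $r$, and injectivity from the cycle that a $u$--$w$ path in $T-r$ would close through $r$.

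The citation is shorter, but it leaves the reader to reconcile Tutte's conventions with the paper's rather unusual definition of component: a connected, not necessarily induced, subgraph whose vertex set cannot be enlarged by a single vertex while keeping the induced subgraph connected. Your argument works directly with the paper's own definitions. It is also more general than you state. Step two needs only the existence of a path (Definition~\ref{miy}) and step three only acyclicity, so neither finiteness nor Proposition~\ref{hfdsxbj} is required. The statement as written, which does not assume $T$ finite and treats $\delta_T(r)$ as a cardinal, is therefore fully covered.

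When you write out the ``routine point'', two details deserve a line each. To show that the vertex set of a component $H$ is an entire path-class, adjoin the first vertex outside $V(H)$ on a path leaving $H$, since the definition only permits adding one vertex at a time. To show that each class $S$ supports only one component, note that a connected spanning subgraph of the tree $(T-r)[S]$ must be all of it, because any missing edge would close a cycle.
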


We will use the concept of isomorphic rooted trees. Before introducing this concept into consideration, it is useful to remind the definition of isomorphism for free trees.
\begin{defn}\label{r1}
Let $T_1$ and $T_2$ be trees. A mapping
\(
F : V(T_1) \rightarrow V(T_2)
\)
is an isomorphism of $T_1$ and $T_2$ if $F$ is bijective and the equivalence
\[
\{u,v\}\in E(T_1)
\iff
\{F(u),F(v)\}\in E(T_2)
\]
is valid for all $u,v\in V(T_1)$. Two trees are isomorphic if there exists an isomorphism of these trees.
\end{defn}

For the case of rooted trees, Definition \ref{r1} must be modified as follows.

\begin{defn}\label{wf}
Let $T_1=T_1(r_1)$ and $T_2=T_2(r_2)$ be rooted trees. A mapping
\(
F:V(T_1)\to V(T_2)
\)
is called an isomorphism of $T_1(r_1)$ and $T_2(r_2)$ if $F$ is an isomorphism of free trees $T_1$ and $T_2$ and the equality 
\(
    F(r_1)=r_2
\)
holds.
\end{defn}

Let us define now the concepts of binary trees and perfect binary trees.

\begin{defn}\label{xxetr}
A {\it binary} tree is a rooted tree $T=T(r)$ such that each vertex $v$ of $T$ is either a leaf of $T(r)$ or has the out-degree two, $\delta^+_{T(r)}(v)=2.$
A binary tree $T(r)$ is called {\it perfect} if the equality 
\begin{equation*}
    \operatorname{lev}_{T(r)}(v)=   \operatorname{lev}_{T(r)}(u)
\end{equation*}
holds for all $u,v \in L(T(r))$.
\end{defn}

\begin{rem}
The term ``binary tree'' used in Definition~\ref{xxetr} is not universally accepted. Steven Finch calls such trees
\emph{strictly binary} in \cite[p.~298]{Finch2003}, these trees are also called \emph{full binary} by Allan Bickle in \cite[p.~56]{Bickle2020}.
We use
the term ``binary
tree'' in accordance with Michael
Drmota's terminology, see, for example,
\cite{Drmota2004,Drmota2009}.
\end{rem}

\begin{figure}[h]
\centering

\begin{minipage}[t]{0.46\textwidth}
\centering
$T(r)$

\begin{tikzpicture}[
baseline=(current bounding box.north),
every node/.style={circle,fill=black,inner sep=1.8pt},
every path/.style={thick}
]

\node (r) at (0,4.5) {};
\node[draw=none,fill=none] at (0,4.9) {$r$};
\node (a) at (-1.5,3) {};
\node (b) at ( 1.5,3) {};
\node (c) at (-2.25,1.5) {};
\node (d) at (-0.75,1.5) {};
\node (e) at ( 0.75,1.5) {};
\node (f) at ( 2.25,1.5) {};

\draw (r)--(a);
\draw (r)--(b);
\draw (a)--(c);
\draw (a)--(d);
\draw (b)--(e);
\draw (b)--(f);

\node[draw=none,fill=none,left]  at (a) {$v_1$};
\node[draw=none,fill=none,right] at (b) {$v_2$};
\node[draw=none,fill=none,below] at (c) {$v_3$};
\node[draw=none,fill=none,below] at (d) {$v_4$};
\node[draw=none,fill=none,below] at (e) {$v_5$};
\node[draw=none,fill=none,below] at (f) {$v_6$};

\end{tikzpicture}
\end{minipage}
\hfill
\begin{minipage}[t]{0.46\textwidth}
\centering
$T^{*}(r^{*})$

\begin{tikzpicture}[
baseline=(current bounding box.north),
every node/.style={circle,fill=black,inner sep=1.8pt},
every path/.style={thick}
]

\node (r) at (0,3) {};
\node[draw=none,fill=none,above] at (r) {$r^{*}$};
\node (a) at (-1,2) {};
\node (b) at ( 1,2) {};
\node (c) at (0.5,1) {};
\node (d) at (1.5,1) {};
\node (e) at (1.0,0) {};
\node (f) at (2.0,0) {};

\draw (r)--(a);
\draw (r)--(b);
\draw (b)--(c);
\draw (b)--(d);
\draw (d)--(e);
\draw (d)--(f);

\node[draw=none,fill=none,left]  at (a) {$v_1^{*}$};
\node[draw=none,fill=none,right] at (b) {$v_2^{*}$};
\node[draw=none,fill=none,left]  at (c) {$v_3^{*}$};
\node[draw=none,fill=none,right] at (d) {$v_4^{*}$};
\node[draw=none,fill=none,below] at (e) {$v_5^{*}$};
\node[draw=none,fill=none,below] at (f) {$v_6^{*}$};

\end{tikzpicture}
\end{minipage}

\caption{Non-isomorphic binary  trees $T=T(r)$ and $T^*=T^{*}(r^{*})$ have the same number of vertices,
the same number of leaves, and the same number of internal nodes.\label{fig1}}
\end{figure}

\begin{example}
Let us consider the binary  trees $T=T(r)$ and $T^*=T^*(r^{*})$ depicted in Figure~\ref{fig1}. 
The binary tree $T = T(r)$ is
perfect, since
\[
\operatorname{lev}_{T(r)}(v_3)
=
\operatorname{lev}_{T(r)}(v_4)
=
\operatorname{lev}_{T(r)}(v_5)
=
\operatorname{lev}_{T(r)}(v_6)
=
2,
\]
but $T^* = T^*(r^*)$ is not perfect,
\[
\operatorname{lev}_{T^*(r^*)}(v_1^*) = 1, \quad
\operatorname{lev}_{T^*(r^*)}(v_3^*) = 2,
\quad 
\operatorname{lev}_{T^*(r^*)}(v_5^*)
=
\operatorname{lev}_{T^*(r^*)}(v_6^*)
= 3.
\]
\end{example}

Let us recall now the concept of  labeled  trees.

\begin{defn}\label{rvjiu}
A labeled tree $T=T(l)$ is a tree $T$
together with a vertex labeling
\(
l:V(T)\to L,
\)
where \(L\) is a given set.
\end{defn}

In the following, we will consider only those vertex labelings $l:V(T)\to L$ for which $L=\mathbb{R}^+$.

\begin{defn}\label{poogf}
Let $T_1=T_1(l_1)$ and $T_2=T_2(l_2)$ be labeled trees.
A mapping $F:V(T_1)\to V(T_2)$  is called an isomorphism of 
$T_1(l_1)$ and $T_2(l_2)$ if 
$F$ is an isomorphism 
of the free trees
$T_1$ and $T_2$, and, moreover,
the equality
\begin{equation*}
l_2(F(v))=l_1(v)
\end{equation*}
holds for every $v\in V(T_1)$. The labeled  trees
 are isomorphic if there is an
isomorphism of these trees.
\end{defn}

Now we are ready to formulate the definition of isomorphism of labeled rooted trees.

\begin{defn}\label{saq}
Let $T_1=T_1(r_1,l_1)$ and $T_2=T_2(r_2,l_2)$ be labeled rooted trees.
A mapping $F:V(T_1)\to V(T_2)$  is called an isomorphism of 
$T_1(r_1,l_1)$ and $T_2(r_2,l_2)$ if 
$F$ is an isomorphism 
of the rooted trees
$T_1(r_1)$ and $T_2(r_2)$, and, 
simultaneously, $F$ is an isomorphism of the labeled trees $T_1(l_1)$ and $T_2(l_2)$. The labeled rooted trees are isomorphic if there exists an isomorphism of these trees.
\end{defn}

The next our goal is to remind the definition of representing trees of finite ultrametric spaces. To give a definition of representing trees, we will use the concepts of complete multipartite graphs and diametrical graphs.

\begin{defn}\label{edrtaa}
Let $k\geq 2$ be an integer number.
A non-empty graph $G$ is called complete $k$-partite if its vertices can be divided into disjoint non-empty sets $X_1,\ldots,X_k$ so that there are no edges joining the vertices which belong to the same $X_i$, and, moreover, any two vertices from different $X_i$ and $X_j$ are adjacent. In this case we write
\(
G=G[X_1,\ldots,X_k].
\)
\end{defn}

We shall say that $G$ is a \emph{complete multipartite graph} if there exists $k$ such that $G$ is complete $k$-partite.

\begin{defn}\label{okiu}
Let $(X,d)$ be a finite metric space with $|X|\geq 2$. The diametrical graph $G_X$ of $(X,d)$ is a graph such that $V(G)=X$ and, for all $u,v\in X$,
\[
\{u,v\}\in E(G_{X})
\iff
d(u,v)=\operatorname{diam}X.
\]
\end{defn}

The following theorem was proved in \cite{DDP2011pNUAA}.

\begin{thm}\label{kmjkusa}
Let $(X,d)$ be a finite ultrametric space with $|X|\geq 2$. Then the diametrical graph
\(
G_{X}
\)
is complete multipartite.
\end{thm}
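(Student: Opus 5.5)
We will prove Theorem~\ref{kmjkusa} by showing that the relation ``$d(u,v)<\operatorname{diam}X$ or $u=v$'' is an equivalence relation on $X$. Its equivalence classes will be the parts $X_1,\dots,X_k$ of the complete multipartite structure.

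Write $D:=\operatorname{diam}X$; this is a maximum since $X$ is finite. Define $u\sim v$ if and only if $d(u,v)<D$, with $u\sim u$ because $d(u,u)=0<D$, where $D>0$ as $|X|\ge 2$. Symmetry follows from $d(u,v)=d(v,u)$. For transitivity, suppose $d(u,w)<D$ and $d(w,v)<D$. The strong triangle inequality \eqref{ddd} gives
\[
d(u,v)\le \max\{d(u,w),d(w,v)\}<D,
\]
so $u\sim v$. Thus $\sim$ is an equivalence relation, and we let $X_1,\dots,X_k$ be its (non-empty, disjoint) classes.

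By Definition~\ref{okiu}, two distinct vertices $u,v$ are adjacent in $G_X$ exactly when $d(u,v)=D$. Since $d(u,v)\le D$ always holds, this is the same as $u\not\sim v$. Hence vertices in the same class $X_i$ are never adjacent, and vertices in different classes are always adjacent.

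It remains to check the conditions of Definition~\ref{edrtaa}, namely that $k\ge 2$ and that $G_X$ is non-empty. Because $|X|\ge2$ and $X$ is finite, there are points $x,y$ with $d(x,y)=D>0$. Then $x\not\sim y$, so $x$ and $y$ lie in different classes, which gives $k\ge2$, and $\{x,y\}\in E(G_X)$, so $G_X$ is non-empty. Therefore $G_X=G_X[X_1,\dots,X_k]$ is complete $k$-partite.

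The argument has no real obstacle. The only point needing care is transitivity, which is exactly where the ultrametric inequality is used; for a general metric the statement fails.
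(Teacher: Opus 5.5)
Your proof is correct and complete. The strong triangle inequality \eqref{ddd} makes $d(u,v)<\operatorname{diam}X$ an equivalence relation on $X$, its classes are exactly the parts of $G_X$, and you also verify the side conditions $k\ge 2$ and $E(G_X)\neq\emptyset$ that Definition~\ref{edrtaa} requires. The paper does not prove this theorem itself; it quotes it from \cite{DDP2011pNUAA}. Your argument is the standard one, and it is equivalent to the observation that among any three points of $X$ the number of pairs at distance $\operatorname{diam}X$ is never exactly one.
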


Now we are ready to describe an algorithm for construction of representing trees $T_X=T_X(r_X,l_X)$ of finite ultrametric spaces $(X,d)$.

{\bf Construction of representing trees.} 
 Let $(X,d)$ be a finite ultrametric space.

 The tree $T_X(r_X,l_X)$ has the root 
 $r_X:=X$ with 
 \(
 l_X(r_X):=\operatorname{diam} X.
 \)

If $X$ is a single-point set, then the root $X$ is the unique vertex of $T_X$, and the construction is completed.

Let $|X|\ge 2$. Then, according to Theorem~\ref{kmjkusa}, the diametrical graph $G_X$ is complete multipartite,
\(
G_{X}=G_{X}[X_1,\ldots,X_k]
\)
with $k\ge 2$. In this case the sets $X_1,\ldots,X_k$ are the nodes of the first level of $T_X(r_X)$ and
\(
l_X(X_i):=\operatorname{diam}X_i
\)
for $i=1,\ldots,k.$
The nodes of the first level with the label $0$ are leaves, and those indicated by strictly positive labels are internal nodes of $T_X(r_X)$. If all nodes of the first level are leaves, then the representing tree $T_X(r_X,l_X)$ is constructed. 

If some nodes $X_i$ satisfy the inequality $l_X(X_i)>0$, then by repeating the above-described procedure with each $(X_i,d |_{X_i \times X_i})$ satisfying this inequality
 we obtain the nodes of the second level, etc. 
 
 Since the set $X$ is finite, all nodes on some level will be leaves, and the construction of $T_X(r_X,l_X)$ is completed.

\begin{rem}\label{bvy}
    The above described algorithm 
 was proposed in \cite{PD2014JMS}.
\end{rem}

\begin{thm}\label{hds11}
Let $T=T(r,l)$ be a finite labeled rooted tree with the root $r$
and the labeling $l:V(T)\to \mathbb{R}^{+}$.
Then the following two statements are equivalent.

\begin{enumerate}
\item[(i)]
For every $u\in V(T)$ we have $\delta_{T(r)}^{+}(u)\neq 1$, and
\[
(\delta_{T(r)}^{+}(u)=0)\iff (l(u)=0),
\]
and, in addition, the inequality
\begin{equation*}
l(z)<l(x)
\end{equation*}
holds whenever $x,z\in V(T)$ and $z$ is a direct successor of $x$.

\item[(ii)]
There exists a finite ultrametric space $(X,d)$ such that the representing
tree $T_X=T_X(r_X,l_X)$ and $T=T(r,l)$ are isomorphic as labeled rooted trees.
\end{enumerate}
\end{thm}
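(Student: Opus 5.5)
The plan is to prove the two implications separately: (ii)$\Rightarrow$(i) by reading off properties of representing trees directly from the construction, and (i)$\Rightarrow$(ii) by building an ultrametric on the leaves of $T$ and then matching its representing tree with $T$ by induction on height.

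\textbf{(ii)$\Rightarrow$(i).} Since isomorphisms of labeled rooted trees preserve out-degrees, labels and the direct successor relation, it suffices to check (i) for $T_X(r_X,l_X)$ itself.

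\begin{enumerate}
\item[(a)] Each internal node $Y$ is a subset with $|Y|\ge 2$. By Theorem~\ref{kmjkusa}, its direct successors are the $k\ge 2$ parts of the complete multipartite graph $G_Y$. Hence $\delta^+(Y)\neq 1$.
\item[(b)] A node $Y$ is a leaf if and only if $\operatorname{diam} Y=0$, which is exactly the rule of the construction. So $\delta^+(Y)=0$ if and only if $l_X(Y)=0$.
\item[(c)] Let $Y_i$ be a part of $G_Y$. Two points of $Y_i$ are not adjacent in $G_Y$, so their distance is strictly less than $\operatorname{diam} Y$. Since $Y_i$ is finite, $\operatorname{diam} Y_i<\operatorname{diam} Y$, which gives the strict inequality between labels.
\end{enumerate}

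\textbf{(i)$\Rightarrow$(ii), construction.} Take $X:=L(T(r))$. For distinct leaves $u,v$, set $d(u,v):=l(w)$, where $w$ is the deepest common vertex of $P_{u,r}$ and $P_{v,r}$, and set $d(u,u)=0$.

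\begin{enumerate}
\item[(a)] $d$ is a metric: symmetry is clear. For $u\neq v$ the vertex $w$ is internal, so $l(w)>0$ by (i).
\item[(b)] The strong triangle inequality holds. For three leaves $u,v,z$, two of the three pairwise meet-vertices coincide, and the third lies weakly below them on a root path. Since labels strictly decrease along root paths, the two largest of the three distances are equal, which is exactly \eqref{ddd} for every ordering.
\end{enumerate}

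\textbf{(i)$\Rightarrow$(ii), identification.} For each vertex $v$, let $X_v\subseteq X$ be the set of leaves below $v$. I would show by induction on height that $F\colon v\mapsto X_v$ is an isomorphism of $T(r,l)$ onto $T_X(r_X,l_X)$. The key claim is the following. If $v$ is internal with direct successors $v_1,\dots,v_k$ ($k\ge 2$), then:

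\begin{enumerate}
\item[(a)] $\operatorname{diam} X_v=l(v)$. This is witnessed by leaves from two distinct branches, and every other distance inside $X_v$ is a label of a vertex in some branch, hence smaller by monotonicity.
\item[(b)] Two leaves of $X_v$ are at distance $\operatorname{diam} X_v$ if and only if they lie in different branches. Within one branch $X_{v_i}$ all distances are $\le l(v_i)<l(v)$.
\item[(c)] Consequently, the diametrical graph of $X_v$ is $G[X_{v_1},\dots,X_{v_k}]$.
\end{enumerate}

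Thus the direct successors of $X_v$ in the construction of $T_X$ are exactly the $X_{v_i}$, with labels $\operatorname{diam} X_{v_i}=l(v_i)$. For a leaf $v$, $X_v$ is a singleton with diameter $0=l(v)$. Distinct vertices give distinct leaf sets, because each internal vertex has at least two successors and so no vertex has the same leaf set as its child. Therefore $F$ is bijective, preserves edges, the root and labels, and is the required isomorphism.

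The main obstacle is this identification step: pinning down the diametrical graph via (b), and checking that the recursion in the construction of $T_X$ follows $T$ level by level. That is where both conditions in (i) are used: $\delta^+\neq 1$ and strict monotonicity of labels.
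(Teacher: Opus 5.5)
Your proof is correct in both directions, and it uses the hypotheses of (i) where they are actually needed: $\delta^+_{T(r)}\neq 1$ gives $k\ge 2$ and the injectivity of $v\mapsto X_v$, and strict monotonicity of labels gives positivity of $d$, the strong triangle inequality and the identification of the diametrical graphs. The paper gives no proof of this theorem and simply quotes it from \cite[Theorem~2.7]{DP2018pNUAA}. Your argument is the standard one: because labels strictly decrease along root paths, your distance (the label of the deepest common ancestor of leaves $u\neq v$) equals $\max_{w\in V(P_{u,v})} l(w)$. That is exactly the formula of Lemma~\ref{UT2Lemma1}, so your space $(L(T(r)),d)$ is the natural inverse of that lemma.
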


The proof of Theorem \ref{hds11}
can be found in \cite[Theorem 2.7]{DP2018pNUAA}.

\begin{thm}\label{rvqqbb}
Let $(X,d)$ and $(Y,\rho)$ be  finite ultrametric spaces. Then the representing trees
$T_X=T_X(r_X,
l_X)$ and $T_Y=T_Y(r_Y,l_Y)$ are isomorphic as labeled rooted trees if and only if
$(X,d)$ and $(Y,\rho)$ are isometric.
\end{thm}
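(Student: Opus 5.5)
We prove the two implications separately, arguing by induction on $|X|$ and following the recursive construction of representing trees.

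\emph{Isometric spaces have isomorphic trees.} Let $\Phi\colon X\to Y$ be an isometry. We show by induction on $|X|$ that the map $F$, defined by $F(A):=\Phi(A)$ for every vertex $A\subseteq X$ of $T_X$, is an isomorphism of $T_X(r_X,l_X)$ and $T_Y(r_Y,l_Y)$.

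If $|X|=1$, both trees consist of a single vertex labeled $0$, and there is nothing to prove.

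Let $|X|\ge 2$. Since $\Phi$ preserves distances, $\operatorname{diam}X=\operatorname{diam}Y$, and for all $u,v\in X$ we have $\{u,v\}\in E(G_X)$ if and only if $\{\Phi(u),\Phi(v)\}\in E(G_Y)$. Hence $\Phi$ is a graph isomorphism of $G_X$ onto $G_Y$.

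In a complete multipartite graph the parts are exactly the classes of the relation ``$u=v$ or $u,v$ are non-adjacent'', which is an equivalence relation. A graph isomorphism preserves this relation, so $\Phi$ maps the parts $X_1,\dots,X_k$ of $G_X$ bijectively onto the parts of $G_Y$ (Theorem~\ref{kmjkusa}). Each restriction $\Phi|_{X_i}$ is an isometry of $X_i$ onto $\Phi(X_i)$, so $\operatorname{diam}X_i=\operatorname{diam}\Phi(X_i)$ and the labels of first-level nodes agree. The induction hypothesis applied to each restriction $\Phi|_{X_i}$ shows that $F$ maps the subtree rooted at $X_i$ isomorphically onto the subtree rooted at $\Phi(X_i)$. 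Gluing these subtree isomorphisms at the roots gives the required isomorphism.

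\emph{Isomorphic trees give isometric spaces.} Here the key step is an intrinsic formula for $d$ in terms of $T_X$. First we note that the leaves of $T_X$ are exactly the singletons $\{x\}$, $x\in X$. Indeed, a node is a leaf if and only if its label, which is its diameter, equals $0$, and since the vertices on each level partition the corresponding parent sets, every point of $X$ lies in exactly one leaf.

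Next we claim that for distinct $x,y\in X$,
\[
d(x,y)=l_X(u_{x,y}),
\]
where $u_{x,y}$ is the deepest common vertex of the paths $P_{\{x\},r_X}$ and $P_{\{y\},r_X}$. We prove this by induction following the construction. Let $A=u_{x,y}$, so $x,y\in A$. The points $x$ and $y$ lie in different direct successors of $A$, which are the parts of the diametrical graph $G_A$. Different parts of $G_A$ are joined by edges of $G_A$, hence $d(x,y)=\operatorname{diam}A=l_X(A)$.

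Now let $F$ be an isomorphism of $T_X(r_X,l_X)$ and $T_Y(r_Y,l_Y)$. Since $F$ maps the root to the root, it preserves levels and direct successors, so it maps leaves to leaves. We define $\Phi(x)$ by the condition $F(\{x\})=\{\Phi(x)\}$; this $\Phi$ is a bijection $X\to Y$. Because $F$ preserves paths to the root, it maps the deepest common ancestor of $\{x\}$ and $\{y\}$ to the deepest common ancestor of $\{\Phi(x)\}$ and $\{\Phi(y)\}$. Since $F$ also preserves labels, the displayed formula gives $\rho(\Phi(x),\Phi(y))=d(x,y)$, so $\Phi$ is an isometry.

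The main obstacle is the bookkeeping in the displayed distance formula, namely verifying that separation of $x$ and $y$ at the node $A$ corresponds exactly to their lying in different parts of $G_A$. The rest consists of routine inductions.
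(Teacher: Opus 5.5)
Your proof is correct, and there is no in-paper argument to compare it with: the paper does not prove Theorem~\ref{rvqqbb} but cites \cite[Theorem~2.6]{DPT2017FPTA} and \cite[Theorem~1.10]{DP2018pNUAA}. Your argument is therefore a self-contained substitute. Its converse direction rests on the same ingredient the paper records as Lemma~\ref{UT2Lemma1}. The path $P_{\{x\},\{y\}}$ consists of the two leaf-to-root paths cut off at $u_{x,y}$, and labels strictly increase toward the root (Theorem~\ref{hds11}). Hence the maximum of $l_X$ on $P_{\{x\},\{y\}}$ is exactly $l_X(u_{x,y})$, so your deepest-common-ancestor formula and Lemma~\ref{UT2Lemma1} are the same statement. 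Your forward direction is a clean induction on the recursive construction. It pushes vertices forward along $\Phi$ and uses that a graph isomorphism of complete multipartite graphs permutes the parts, since the parts are the non-adjacency classes.

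Two facts you use tacitly deserve a line each. First, the vertices of $T_X$ that contain a point $x$ are precisely the vertices of $P_{\{x\},r_X}$, because the children of every internal node partition it. This is what justifies both $x,y\in u_{x,y}$ and the claim that $x$ and $y$ lie in different children of $u_{x,y}$. Second, the subtree of $T_X$ hanging below $X_i$ is the representing tree $T_{X_i}$ of $(X_i,d|_{X_i\times X_i})$, with the same labels. This is what lets you apply the induction hypothesis to $\Phi|_{X_i}$. Also, the distance formula needs no induction: the argument you actually give for it is direct.
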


Theorem \ref{rvqqbb} was formulated in \cite[Theorem 2.6]{DPT2017FPTA}.
A complete proof of Theorem \ref{rvqqbb} is given in \cite[Theorem 1.10]{DP2018pNUAA}.

In the next section of the paper we also use the following proposition.

\begin{prop}\label{effrt}
Let $(X,d)$ and $(Y,\rho)$ be finite ultrametric spaces with  representing trees $T_X=T_X(r_X,l_X)$ and $T_Y=T_Y(r_Y,l_Y)$, respectively. Then 
\(
T_X(r_X,l_X)
\) and \(
T_Y(r_Y,l_Y)
\)
are isomorphic as labeled rooted trees if and only if
\(
T_X(l_X)
\) and
\(T_Y(l_Y)
\)
are isomorphic as labeled trees.
\end{prop}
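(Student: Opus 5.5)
The nontrivial direction is the ``if'' part: if the free labeled trees $T_X(l_X)$ and $T_Y(l_Y)$ are isomorphic, then so are the rooted labeled trees. The ``only if'' part is immediate, since by Definition~\ref{saq} an isomorphism of labeled rooted trees is in particular an isomorphism of labeled trees. So let $F\colon V(T_X)\to V(T_Y)$ be an isomorphism of the labeled trees $T_X(l_X)$ and $T_Y(l_Y)$. By Definition~\ref{saq}, it suffices to show that $F(r_X)=r_Y$.

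The key observation is that in a representing tree the root is the \emph{unique} vertex carrying the maximal label. To see this, recall from the construction that $l_X(r_X)=\operatorname{diam}X$. By Theorem~\ref{hds11} (the implication (ii)$\Rightarrow$(i), applied to $T_X$ itself), labels strictly decrease along every edge from a vertex to its direct successor. Every vertex $v\neq r_X$ lies at the end of the unique path $P_{v,r_X}$ from the root (Proposition~\ref{hfdsxbj}), and that path is a chain of direct successors. Hence
\[
l_X(v)<l_X(r_X)\quad\text{for all } v\in V(T_X)\setminus\{r_X\}.
\]
The same argument gives $l_Y(w)<l_Y(r_Y)$ for all $w\neq r_Y$. (If $|X|=1$, the tree has a single vertex and there is nothing to prove.)

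Now $F$ is a bijection that preserves labels, so it maps the set of vertices with the largest label in $T_X$ onto the set of vertices with the largest label in $T_Y$. Both sets are singletons, namely $\{r_X\}$ and $\{r_Y\}$, and therefore $F(r_X)=r_Y$. Thus $F$ is an isomorphism of the rooted trees $T_X(r_X)$ and $T_Y(r_Y)$ (Definition~\ref{wf}), and it is simultaneously an isomorphism of the labeled trees, so it is an isomorphism of labeled rooted trees.

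The only step needing real care is the strict decrease of labels from a vertex to its direct successors. Instead of Theorem~\ref{hds11}, one can check it directly from the construction. A successor $X_i$ of an internal node $S$ is a part of the diametrical graph $G_S=G_S[X_1,\ldots,X_k]$. This graph is complete multipartite by Theorem~\ref{kmjkusa}, so there are no diametrical edges inside a part. Hence $\operatorname{diam}X_i<\operatorname{diam}S$, which is exactly the needed decrease.
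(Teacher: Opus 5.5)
Your proof is correct and follows essentially the same route as the paper: both use Theorem~\ref{hds11} to get $l_X(v)<l_X(r_X)$ and $l_Y(w)<l_Y(r_Y)$ for all non-root vertices, and then conclude $F(r_X)=r_Y$ because $F$ is a label-preserving bijection. The paper phrases this last step as a contradiction argument after first showing $l_X(r_X)=l_Y(r_Y)$, whereas you argue directly that the unique maximal-label vertex is sent to the unique maximal-label vertex.
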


\begin{proof}
If $T_X(r_X,l_X)$ and $T_Y(r_Y,l_Y)$ are isomorphic, then $T_X(l_X)$ and $T_Y(l_Y)$ also are isomorphic by Definition~\ref{saq}.

Let $T_X(l_X)$ and $T_Y(l_Y)$ be isomorphic, and let
\(
F:V(T_X)\to V(T_Y)
\)
be an isomorphism of $T_X(l_X)$ and $T_Y(l_Y)$. We claim that $F:V(T_X)\to V(T_Y)$ also is an isomorphism of $T_X(r_X,l_X)$ and $T_Y(r_Y,l_Y)$.

Definition \ref{wf} and Definition \ref{saq} imply that the above claim is valid if and only if the equality
\begin{equation}
F(r_X)=r_Y
\label{eq:star1}
\end{equation}
holds.

Let us prove equality \eqref{eq:star1}. Since $F$ is an isomorphism of $T_X(l_X)$ and $T_Y(l_Y)$, we have the equality
\begin{equation}
l_Y(F(v_X))=l_X(v_X)
\label{eq:star2}
\end{equation}
for each $v_X\in V(T_X)$.
Theorem \ref{hds11} implies the inequalities
\begin{equation}
l_X(v_X)<l_X(r_X)
\label{eq:star3}
\end{equation}
and
\begin{equation}
l_Y(v_Y)< l_Y(r_Y)
\label{eq:star4}
\end{equation}
for all $v_X\in V(T_X)\setminus\{r_X\}$ and $v_Y\in V(T_Y)\setminus\{r_Y\}$.
Since $F:V(T_X)\to V(T_Y)$ is bijective, equality \eqref{eq:star2} and inequalities \eqref{eq:star3}, \eqref{eq:star4} imply
\begin{equation}
l_Y(r_Y)=l_X(r_X).
\label{eq:star5}
\end{equation}
If \eqref{eq:star1} does not hold, then there is $u_Y\in V(T_Y)\setminus\{r_Y\}$ such that
\begin{equation}
F(r_X)=u_Y.
\label{eq:star6}
\end{equation}
Now using \eqref{eq:star2} with $v_X=r_X$ and \eqref{eq:star6}, we obtain
\begin{equation*}
l_X(r_X)=l_Y(F(r_X))=l_Y(u_Y).
\end{equation*}
Since $u_Y\in V(T_Y)\setminus\{r_Y\}$, the strict inequality
\(
l_Y(u_Y)<l_Y(r_Y)
\)
holds. The last inequality and equalities \eqref{eq:star5} imply
\begin{equation*}
l_X(r_X)<l_Y(r_Y),
\end{equation*}
contrary to \eqref{eq:star5}. 

Equality \eqref{eq:star1} follows.
\end{proof}

\section{Auxiliary lemmas}\label{seq3}

Let us start with lemmas about leaves of rooted trees.

\begin{lem}\label{pit}
Let $T=T(r)$ be a rooted tree. Then the following statements are equivalent.
\begin{enumerate}
\item
The vertex set of $T$ contains the unique vertex $r$, 
\begin{equation}
    \label{y1}
V(T)=\{r\}. 
\end{equation}

\item
The root $r$ is a leaf of $T(r)$,
\begin{equation}
    \label{y2}
r\in L(T(r)).
\end{equation}
\end{enumerate}
\end{lem}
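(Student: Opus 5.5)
The equivalence follows by unwinding the definitions of leaf and out-degree, together with the connectedness of trees. We prove the two implications separately.

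\emph{(i) implies (ii).} Suppose $V(T)=\{r\}$. Every edge of a graph is an unordered pair of distinct vertices, so $E(T)=\emptyset$. By \eqref{mmks} this gives $\delta_T(r)=0$. The vertex $r$ is the root, so the first case of \eqref{zbh1} applies and $\delta^+_{T(r)}(r)=\delta_T(r)=0$. Definition~\ref{scm} then says exactly that $r\in L(T(r))$.

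\emph{(ii) implies (i).} Suppose $r\in L(T(r))$. Definition~\ref{scm} and the root case of \eqref{zbh1} give $\delta_T(r)=\delta^+_{T(r)}(r)=0$, so $r$ has no neighbours in $T$. Assume, for contradiction, that there is a vertex $v\in V(T)$ with $v\neq r$. A tree is connected (Definition~\ref{avct}), so by Definition~\ref{miy} there is a path $P_{v,r}\subseteq T$. By \eqref{gasl} this path has $k\ge 1$ edges and ends at $x_k=r$, so $\{x_{k-1},r\}\in E(P_{v,r})\subseteq E(T)$. Hence $\delta_T(r)\ge 1$, which contradicts $\delta_T(r)=0$. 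Therefore $V(T)=\{r\}$, which is \eqref{y1}.

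The argument has no real obstacle. The only point needing care is to use the root case of \eqref{zbh1}, where the out-degree equals the degree, rather than the non-root case $\delta_T(v)-1$.
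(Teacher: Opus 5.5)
Your proof is correct and follows the paper's argument essentially step for step. In both directions you use the root case of \eqref{zbh1} together with Definition~\ref{scm}, and for (ii)$\Rightarrow$(i) you take the edge of the path $P_{v,r}$ incident to $r$ to get $\delta_T(r)\ge 1$, exactly as the paper does.
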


\begin{proof}
Let equality \eqref{y1} hold. 
Then formula \eqref{mmks} implies the equality
\(
\delta_T(r)=0 
\)
and, consequently,
we have
\(
\delta_{T(r)}^+(r)=0
\)
by formula \eqref{zbh1}.
The equality $\delta_{T(r)}^+(r)=0$ and Definition~\ref{scm} give us relation 
\eqref{y2}. 

Thus the implication $(i)\Rightarrow(ii)$ is valid.

Suppose now that $(ii)$ is true but \eqref{y1} does not hold. Then the inequality
\(
|V(T)|\ge 2
\)
holds and, consequently, we can find a vertex
\(
u_0\in V(T)\setminus\{r\}.
\)
Let $P_{u_0,r}$ be the path joining $u_0$ and $r$ in $T$.

It follows directly from the definition of the paths (see formula \eqref{gasl}) that 
there is $v_{0}\in V(P_{u_{0},r})$ such that
\(
\{v_{0},r\}\in E(P_{u_{0},r}).
\)
Since $P_{u_{0},r}$ is a subgraph of $T$,
we also have the relation
\begin{equation}
    \label{bb1}
\{v_{0},r\}\in E(T). 
\end{equation}
Using \eqref{bb1}, we obtain
\begin{equation*}
\delta_{T}(r)
\geq 1
\end{equation*}
and, consequently, $\delta_{T(r)}^+(r)
\geq 1$ by formula \eqref{zbh1}.
Hence, by Definition~\ref{scm}, the root $r$ is not a leaf of $T(r)$, contrary to statement $(ii)$.

 Thus the implication
\(
(ii)\Rightarrow(i)
\)
also is valid.
\end{proof}

The following lemma apparently known.

\begin{lem}\label{rgj}
Let $n \ge 0$ be an integer number and let $T=T(r)$ be a finite perfect binary tree such that
\begin{equation*}
\operatorname{lev}_{T(r)}(v)=n
\end{equation*}
for each leaf $v$ of $T(r)$.
Then the number of  leaves of $T(r)$ is $2^n$,
\begin{equation}
    \label{ew3}
|L(T(r))|=2^n.
\end{equation}
\end{lem}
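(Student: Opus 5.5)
We argue by induction on $n$. For $n=0$, every leaf $v$ satisfies $\operatorname{lev}_{T(r)}(v)=0$. By formula \eqref{lev}, a vertex $v\neq r$ has level $|E(P_{v,r})|\ge 1$, since a path has $k\ge1$ edges. Hence the only possible leaf is $r$. A finite rooted tree always has at least one leaf: take a vertex of maximal level; if it had a direct successor, that successor would have a larger level. So $r\in L(T(r))$, and Lemma~\ref{pit} gives $V(T)=\{r\}$. Thus $|L(T(r))|=1=2^0$.

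For the inductive step, let $n\ge1$ and assume \eqref{ew3} for perfect binary trees whose leaves all have level $n-1$. Every leaf has level $n\ge1$, so $r$ is not a leaf. By Definition~\ref{xxetr} we then have $\delta^+_{T(r)}(r)=\delta_T(r)=2$. Let $u_1,u_2$ be the two vertices adjacent to $r$. By Theorem~\ref{gjl}, $T-r$ has exactly two components $T_1,T_2$, and both are trees. Since $u_1$ and $u_2$ lie in distinct components, we may take $u_i\in V(T_i)$, and we regard $T_i$ as the rooted tree $T_i(u_i)$.

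We need three facts about $T_i(u_i)$.
\begin{enumerate}
\item For $v\in V(T_i)$, the path $P_{v,r}$ in $T$ passes through $u_i$ as its last vertex before $r$. Indeed, the only neighbours of $r$ are $u_1,u_2$, and the part of the path avoiding $r$ stays in one component. By uniqueness of paths (Proposition~\ref{hfdsxbj}), $P_{v,u_i}$ is that subpath and lies in $T_i$. Hence
\begin{equation*}
\operatorname{lev}_{T_i(u_i)}(v)=\operatorname{lev}_{T(r)}(v)-1 .
\end{equation*}
\item The degrees satisfy $\delta_{T_i}(v)=\delta_T(v)$ for $v\neq u_i$, and $\delta_{T_i}(u_i)=\delta_T(u_i)-1$, because $r$ is adjacent only to $u_1,u_2$. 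Consequently $\delta^+_{T_i(u_i)}(v)=\delta^+_{T(r)}(v)$ for every $v\in V(T_i)$.
\item By the first two facts, $T_i(u_i)$ is a binary tree with $L(T_i(u_i))=L(T(r))\cap V(T_i)$, and all its leaves have level $n-1$. So $T_i(u_i)$ is perfect.
\end{enumerate}

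Since $V(T)=\{r\}\cup V(T_1)\cup V(T_2)$ is a disjoint union and $r\notin L(T(r))$, the leaf set of $T(r)$ splits as $L(T_1(u_1))\sqcup L(T_2(u_2))$. The induction hypothesis then gives
\begin{equation*}
|L(T(r))|=2^{n-1}+2^{n-1}=2^n .
\end{equation*}

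The main obstacle is the first fact: the level of each vertex drops by exactly one when passing from $T(r)$ to $T_i(u_i)$. This is where uniqueness of paths is used carefully. The degree bookkeeping in the second fact is routine.
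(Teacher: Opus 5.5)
Your proof is correct and follows the same route the paper sketches: the base case $n=0$ via Lemma~\ref{pit}, and the inductive step by deleting the root and applying Theorem~\ref{gjl} to obtain two perfect binary subtrees whose leaves have level $n-1$. You supply details the paper omits (the existence of a leaf when $n=0$, the level shift $\operatorname{lev}_{T_i(u_i)}=\operatorname{lev}_{T(r)}-1$, and the preservation of out-degrees); the one step you state without justification, that $u_1$ and $u_2$ lie in different components, follows at once, since a $u_1$--$u_2$ path in $T-r$ together with $r$ would form a cycle.
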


\begin{proof}
To prove \eqref{ew3} we note that equality
 \eqref{ew3}  holds  by Lemma \ref{pit} if $n=0$.

For the case $n\geq 1$ 
we can use Theorem \ref{gjl} and prove equality \eqref{ew3} by induction.
\end{proof}

The following two lemmas describe some interconnections between the points of finite ultrametric spaces and the leaves of their representing trees.

\begin{lem} \label{UT2Lemma1}
Let $(X,d)$ be a finite ultrametric space, let $p$ and $q$ be distinct points of $X$, and $T_X = T_X(r_X, l_X)$ be the representing tree of $(X,d)$. Then the equality
\begin{equation*}
d(p,q) = \max_{v \in V(P_{\{p\},\{q\}})} l_X(v)
\end{equation*}
holds, where $P_{\{p\},\{q\}}$ is the path joining the leaves $\{p\}$ and $\{q\}$ in $T_X(r_X)$.
\end{lem}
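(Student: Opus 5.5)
The plan is to identify the path $P_{\{p\},\{q\}}$ concretely and then compute the maximum of the labels along it. By the construction of $T_X(r_X,l_X)$, every vertex of $T_X$ is a non-empty subset of $X$. A direct successor is one of the parts of the complete multipartite diametrical graph of its direct predecessor, so it is a proper subset of its predecessor. The leaves are exactly the singletons $\{x\}$, $x\in X$. Hence for each $x\in X$ the path $P_{\{x\},r_X}$ consists of all vertices $v$ with $x\in v$, and these sets form a chain under inclusion.

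First, I will let $v^*$ be the vertex of smallest cardinality (equivalently, of largest level) containing both $p$ and $q$; it exists since $r_X=X$ contains both. Because $p\neq q$, the vertex $v^*$ is not a leaf, so $l_X(v^*)=\operatorname{diam} v^*>0$ and $v^*$ splits into parts $Y_1,\ldots,Y_k$ of $G_{v^*}$. By minimality of $v^*$, the points $p$ and $q$ lie in different parts $Y_i\ne Y_j$. By Proposition \ref{hfdsxbj} and the chain structure above, the path $P_{\{p\},\{q\}}$ is the union of the segment from $\{p\}$ up to $v^*$ and the segment from $v^*$ down to $\{q\}$, and $v^*$ is its highest vertex.

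Next come the two facts needed. (a) Since $p\in Y_i$ and $q\in Y_j$ with $i\ne j$, the points $p$ and $q$ are adjacent in the diametrical graph of $(v^*,d|_{v^*\times v^*})$, so $d(p,q)=\operatorname{diam} v^*=l_X(v^*)$. (b) Every other vertex $v$ of the path is a proper descendant of $v^*$, hence a subset of $v^*$. Thus $l_X(v)=\operatorname{diam} v\le \operatorname{diam} v^*$; the strict inequality $l(z)<l(x)$ for successors also follows from Theorem \ref{hds11}. Therefore the maximum of $l_X$ over $V(P_{\{p\},\{q\}})$ is attained at $v^*$ and equals $d(p,q)$.

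The main obstacle is the rigorous identification of the path. I must show that the ancestors of $\{p\}$ are exactly the vertices containing $p$, and that the concatenation through $v^*$ is a genuine path, i.e.\ that the two segments share only $v^*$. The first claim follows by induction on level, using that the parts at each step partition their parent. The second holds because any common vertex below $v^*$ would contain both $p$ and $q$, which contradicts the minimality of $v^*$. Uniqueness of the path then comes from Proposition \ref{hfdsxbj}.
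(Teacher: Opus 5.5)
Your proof is correct and follows essentially the argument the paper has in mind; the paper omits its own proof and refers to Lemma~3.2 of \cite{PD2014JMS}. You take the vertex $v^*$ of $P_{\{p\},\{q\}}$ closest to the root, observe that $p$ and $q$ lie in different parts of the diametrical graph of $v^*$ so that $d(p,q)=\operatorname{diam} v^*=l_X(v^*)$, and bound the remaining labels by monotonicity of the diameter under inclusion.
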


The proof of Lemma~\ref{UT2Lemma1} is completly similar to the proof of Lemma~3.2 from \cite{PD2014JMS}, so we omit it here.

Let $(X,d)$ be a metric space. Then, for every $p \in X$, we define a set $D_p(X)$ as
\begin{equation}
    \label{mkkggd}
D_p(X) := \{\, d(p,x) : x \in X \,\}.
\end{equation}

\begin{lem} \label{UT2Lemma2}
 Let $(X,d)$ be a finite ultrametric space with $|X|\geq 2$ and let $T_X = T_X(r_X,l_X)$ be the representing tree of $(X,d)$. Then the equality
\begin{equation}\label{iuhds}
D_p(X)=\{l_X(v):v\in V(P_{\{p\},r_X})\}
\end{equation}
holds for each $p \in X$.
\end{lem}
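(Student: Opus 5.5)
We prove the two inclusions in \eqref{iuhds} separately, with Lemma~\ref{UT2Lemma1} as the main tool. Fix $p\in X$ and write the path from the leaf $\{p\}$ to the root as
\[
\{p\}=v_0,\ v_1,\ \dots,\ v_k=r_X ,
\]
where $v_{i+1}$ is the direct predecessor of $v_i$. Here $k\ge 1$ because $|X|\ge 2$ (Lemma~\ref{pit}). By the construction of $T_X$, each vertex is a subset of $X$ and $v_0\subseteq v_1\subseteq\dots\subseteq v_k=X$. Each $v_i$ is labeled by its diameter, and $l_X(v_0)=0$. Theorem~\ref{hds11}(i) shows that these labels strictly increase along the path:
\[
0=l_X(v_0)<l_X(v_1)<\dots<l_X(v_k)=\operatorname{diam}X .
\]

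\emph{Inclusion ``$\subseteq$''.} The value $0=d(p,p)$ is the label $l_X(v_0)$. For $x\ne p$, Lemma~\ref{UT2Lemma1} gives $d(p,x)$ as the maximum of $l_X$ over the path $P_{\{p\},\{x\}}$. In a rooted tree this path climbs from $\{p\}$ to the deepest common ancestor $v_j$ of $\{p\}$ and $\{x\}$ and then descends to $\{x\}$. Labels strictly decrease when moving down from $v_j$ on either side, so the maximum is attained at $v_j$. Hence $d(p,x)=l_X(v_j)$, which is a label of a vertex of $P_{\{p\},r_X}$.

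\emph{Inclusion ``$\supseteq$''.} Take $i\ge 1$. The vertex $v_i$ is an internal node, so $\delta^+\ge 2$ and it has a direct successor $w\ne v_{i-1}$. Choose a leaf $\{x\}$ lying below $w$, that is, $x\in w$. Then the path $P_{\{p\},\{x\}}$ runs $v_0,\dots,v_i,w,\dots,\{x\}$ with top vertex $v_i$. By Lemma~\ref{UT2Lemma1} and the monotonicity of labels, $d(p,x)=l_X(v_i)$.

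An alternative route avoids the path-maximum argument. Since $v_{i-1}$ and $w$ are distinct parts of the complete multipartite diametrical graph of $(v_i,d)$ (Theorem~\ref{kmjkusa}), and $p\in v_{i-1}$, $x\in w$, we get $d(p,x)=\operatorname{diam}v_i$ directly from the definition of the diametrical graph.

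The main obstacle is the structural fact behind ``$\subseteq$'': the path between two leaves passes through their lowest common ancestor, and labels are maximal there. I would justify it with Proposition~\ref{hfdsxbj} (uniqueness of paths) together with the strict decrease of labels along direct successors given in Theorem~\ref{hds11}. A more hands-on alternative: for $x\ne p$, let $j$ be the least index with $x\in v_j$, so $j\ge 1$. Then $p$ and $x$ lie in different parts $v_{j-1}$ and the part containing $x$ of the diametrical graph of $v_j$, and so $d(p,x)=\operatorname{diam}v_j=l_X(v_j)$. This set-theoretic argument handles both inclusions uniformly, and I would likely present it as the main proof.
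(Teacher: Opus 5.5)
Your proof is correct. For the inclusion ``$\subseteq$'' your first argument matches the paper's, which simply cites Lemma~\ref{UT2Lemma1}. You additionally supply the lowest-common-ancestor step showing that the maximum of $l_X$ along $P_{\{p\},\{x\}}$ is attained at a vertex of $P_{\{p\},r_X}$. You also handle $d(p,p)=0$, which Lemma~\ref{UT2Lemma1}, stated only for distinct points, does not cover.

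For ``$\supseteq$'' the paper argues differently. It uses Proposition~\ref{xxtt}: in an ultrametric space, $\operatorname{diam} S=\max_{s\in S} d(p,s)$ for every $p\in S$. Since each vertex $v$ of $P_{\{p\},r_X}$ contains $p$ and $l_X(v)=\operatorname{diam} v$, this gives $l_X(v)\in D_p(X)$ at once. That route needs no information about branching in $T_X$ and no witness. Your argument instead exhibits an explicit witness $x$ in a sibling branch $w\ne v_{i-1}$. This uses $\delta^+(v_i)\ge 2$, but it tells you exactly which points realize the distance $l_X(v_i)$ from $p$.

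Your preferred unified version takes the least $j$ with $x\in v_j$ and observes that $p\in v_{j-1}$ and $x$ lie in distinct parts of $G_{v_j}$. It avoids both Lemma~\ref{UT2Lemma1} (whose proof the paper omits) and Proposition~\ref{xxtt}, relying only on the construction of $T_X$ and Definition~\ref{okiu}. Combined with the strict monotonicity of labels along the path, it even yields the finer statement
\[
\{x\in X: d(p,x)=l_X(v_j)\}=v_j\setminus v_{j-1}\quad (j\ge 1),
\]
which is the kind of information needed for the counting questions raised in Section~\ref{seq5}.
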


\begin{proof}
Let us consider an arbitrary \(p\in X\).
The inclusion
\begin{equation*}
D_p(X)\subseteq
\left\{l_{X}(v):\, v\in V(P_{\{p\},r_X})\right\}
\end{equation*}
follows from Lemma \ref{UT2Lemma1}.

Let us prove the converse inclusion,
\begin{equation}
    \label{qmn349}
D_p(X)\supseteq
\left\{l_{X}(v):\, v\in V(P_{\{p\},r_X})\right\}.
\end{equation}

Proposition \ref{xxtt} implies the equality
\begin{equation}
    \label{444}
\max\{d(p,a):a\in A\}=\operatorname{diam}A
\end{equation}
for each $A\subseteq X$ with $p\in A$. By definition of $T_X(r_X,l_X)$, every $u\in V(T_X)$ is a subset of $X$ and the equality 
\begin{equation}\label{rar}
    l_X(u)=\operatorname{diam} u
\end{equation}
holds.
Since each vertex \(v\) of \(P_{\{p\},r_X}\) is a subset of \(X\) satisfying the relation \(p\in v\),
equalities \eqref{rar} and \eqref{444} imply
\begin{equation*}
l_X(v)\in D_p(X)
\end{equation*}
for every $v \in V(P_{\{p\},r_X})$.
Thus \eqref{qmn349} holds. 

Equality \eqref{iuhds} follows.
\end{proof}

The next lemma directly follows from Definition \ref{zcgh572} and equality \eqref{mkkggd}.

\begin{lem}\label{xute}
 Let $(X,d)$ be a metric space.
Then the equality
\begin{equation*}
    C(X)=\bigcap_{p\in X} D_p(X)
\end{equation*}
holds.
\end{lem}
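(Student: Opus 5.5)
The statement is Lemma~\ref{xute}: $C(X)=\bigcap_{p\in X}D_p(X)$ for any metric space $(X,d)$. The plan is a direct double-inclusion argument that unpacks Definition~\ref{zcgh572} and formula \eqref{mkkggd}.

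For the inclusion $C(X)\subseteq\bigcap_{p\in X}D_p(X)$, take $t\in C(X)$ and an arbitrary $p\in X$. By Definition~\ref{zcgh572} there is $x\in X$ with $d(p,x)=t$. By \eqref{mkkggd} this gives $t\in D_p(X)$. Since $p$ was arbitrary, $t$ lies in the intersection.

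For the reverse inclusion, take $t\in\bigcap_{p\in X}D_p(X)$. For each $p\in X$, formula \eqref{mkkggd} yields some $x\in X$ with $d(p,x)=t$. Since $X$ is non-empty, we can fix some $p_0\in X$. Then $t=d(p_0,x_0)$ for a suitable $x_0$, so $t\in D(X)$. Together with the previous observation, this shows that $t$ satisfies the defining condition of $C(X)$ in Definition~\ref{zcgh572}.

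There is no real obstacle here. The only point that needs care is the membership $t\in D(X)$, which Definition~\ref{zcgh572} requires. It follows from the non-emptiness of $X$ (guaranteed by Definition~\ref{ganb}) together with the inclusion $D_p(X)\subseteq D(X)$.
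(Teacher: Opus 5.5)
Your double-inclusion argument is correct and is exactly what the paper means when it says the lemma follows directly from Definition~\ref{zcgh572} and formula~\eqref{mkkggd}. Your remark that the non-emptiness of $X$ gives $\bigcap_{p\in X}D_p(X)\subseteq D(X)$ is the one detail the paper leaves implicit.
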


The following two lemmas are reformulations of Proposition~3.4 and Corollary~3.5 from \cite{DovgosheyRovenska2026}.

\begin{lem}\label{suul}
Let $(X,d)$ be a finite ultrametric space with $|X|\geq 2$ and let the diametrical graph $G(X)$ be complete $k$-partite with the parts $X_1,\ldots,X_k$.
Then the equality
\begin{equation*}
C(X)=\{\operatorname{diam} X\}\cup\left(\bigcap_{i=1}^{k} C(X_i)\right)
\end{equation*}
holds, where $C(X_1),\dots,C(X_k)$ are the centers of distances of the ultrametric spaces $(X_1,d|_{X_1 \times X_1})$, $\dots$, $(X_k,d|_{X_k \times X_k})$.
\end{lem}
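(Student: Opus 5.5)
We prove the equality by establishing the two inclusions separately. The tools are Lemma~\ref{xute}, which writes $C(X)$ as $\bigcap_{p\in X}D_p(X)$, and a description of $D_p(X)$ in terms of the parts of the diametrical graph. Write $\Delta:=\operatorname{diam}X$. The key observation is the decomposition
\begin{equation*}
D_p(X)=\{\Delta\}\cup D_p(X_i)\qquad\text{for every } p\in X_i,
\end{equation*}
where $D_p(X_i)$ is computed in the subspace $(X_i,d|_{X_i\times X_i})$.

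First we justify this decomposition. Let $x\in X$. If $x\notin X_i$, then $p$ and $x$ lie in different parts of the complete multipartite graph $G(X)$ given by Theorem~\ref{kmjkusa}. Hence they are adjacent, so $d(p,x)=\Delta$. If $x\in X_i$, then $d(p,x)\in D_p(X_i)$ trivially. Since $k\ge 2$, some $x\notin X_i$ exists, so $\Delta$ is indeed attained from $p$. This gives the displayed equality.

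Next we intersect over all $p$, grouping the points by part:
\begin{equation*}
C(X)=\bigcap_{i=1}^k\bigcap_{p\in X_i}\bigl(\{\Delta\}\cup D_p(X_i)\bigr)
=\{\Delta\}\cup\bigcap_{i=1}^k\bigcap_{p\in X_i}D_p(X_i)
=\{\Delta\}\cup\bigcap_{i=1}^k C(X_i).
\end{equation*}
The second equality is the distributive law $\bigcap_j(A\cup B_j)=A\cup\bigcap_j B_j$. The last equality is Lemma~\ref{xute} applied to each subspace $X_i$.

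The one point needing care is when $X_i$ is a singleton. Then $D_p(X_i)=\{0\}$ and $C(X_i)=\{0\}$, and Lemma~\ref{xute} still applies to this one-point metric space, so the formula remains consistent. Note also that $0\in D_p(X)$ for every $p$, so $0\in C(X)$ always. This agrees with the right-hand side, because $0\in C(X_i)$ for every $i$.

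No real obstacle arises. The only substantive input is that every point outside $X_i$ is at distance exactly $\Delta$ from every point of $X_i$, and this is immediate from the complete multipartite structure of $G(X)$.
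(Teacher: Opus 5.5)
Your proof is correct. The paper gives no argument for this lemma; it imports it as a reformulation of Proposition~3.4 of \cite{DovgosheyRovenska2026}, so there is no in-paper proof to compare against. Your derivation is a complete, self-contained substitute for that citation. It consists of the pointwise identity $D_p(X)=\{\operatorname{diam}X\}\cup D_p(X_i)$ for $p\in X_i$, followed by Lemma~\ref{xute} and the distributive law $\bigcap_j(A\cup B_j)=A\cup\bigcap_j B_j$ over the nonempty index set $X$.

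Your argument uses ultrametricity only through the hypothesis that $G(X)$ is complete $k$-partite with $k\ge 2$. So the same identity holds for any finite metric space whose diametrical graph has this property.

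One small wording point: you announce two separate inclusions, but the chain of equalities you give settles both directions at once. The reverse inclusion $D_p(X_i)\subseteq D_p(X)$ hidden in your pointwise identity is immediate from $X_i\subseteq X$.
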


\begin{lem} \label{kvcr}
Let $(X,d)$ be a finite ultrametric space with $|X|\geq 2$, let the diametrical graph $G_X$ be complete $k$-partite with the parts $X_1,\ldots,X_k$,
and let $X_{k_0}$ be a part of $G_X$ such that
\[
|X_{k_0}|=\min\limits_{1\leq j\leq k}|X_j|.
\]
Then the inequality
\[
|C(X)| \leq 1+|C(X_{k_0})|
\]
holds, where 
$C(X_{k_0})$ is the center of distances of the ultrametric space 
$(X_{k_0}, d|_{X_{k_0}\times X_{k_0}})$.
\end{lem}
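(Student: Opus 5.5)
The statement just above is Lemma~\ref{kvcr}. I will derive it from Lemma~\ref{suul} and the trivial fact that an intersection is contained in each of the sets being intersected.

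By Lemma~\ref{suul} we have
\[
C(X)=\{\operatorname{diam} X\}\cup\Bigl(\bigcap_{i=1}^{k} C(X_i)\Bigr).
\]
Since $X_{k_0}$ is one of the parts $X_1,\dots,X_k$, the intersection satisfies
\[
\bigcap_{i=1}^{k} C(X_i)\subseteq C(X_{k_0}).
\]
Therefore
\[
C(X)\subseteq\{\operatorname{diam} X\}\cup C(X_{k_0}).
\]
Subadditivity of cardinality for a union of two finite sets then gives
\[
|C(X)|\le 1+|C(X_{k_0})|.
\]

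A few details need checking. All sets involved are finite, because $X$ is finite. The case $|X_{k_0}|=1$ causes no trouble: there $C(X_{k_0})=D(X_{k_0})=\{0\}$, and the bound reads $|C(X)|\le 2$. This is consistent with the formula, since $0$ lies in every $C(X_i)$.

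The minimality hypothesis on $|X_{k_0}|$ is not needed for the inequality itself; it holds for every part. The hypothesis presumably matters only when this lemma is later combined with the bound $|X_{k_0}|\le |X|/2$ to recover Theorem~\ref{ter1-1} by induction.

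I expect no real obstacle. The only step requiring care is justifying Lemma~\ref{suul}, but it is already stated and may be assumed here.
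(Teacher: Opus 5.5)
Your argument is correct and is essentially the paper's route. The paper gives no separate proof: it presents this lemma as a reformulation of a corollary, in \cite{DovgosheyRovenska2026}, of the proposition restated as Lemma~\ref{suul}, and your inclusion $C(X)\subseteq\{\operatorname{diam}X\}\cup C(X_{k_0})$ is exactly that corollary step. You are also right that the minimality of $|X_{k_0}|$ is not needed for the inequality itself; it matters only when the lemma is combined with Theorem~\ref{ter1-1}, as in the proof of Lemma~\ref{UT2Lemma3}.
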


The next lemma shows, in particular, that the representing trees of ${\bf MCD}$-spaces are perfect binary  trees.

\begin{lem} \label{UT2Lemma3}
Let $(X,d)$ be a ${\bf MCD}$-space and let $n \geq 1$ be an integer number such that 
\begin{equation}
    \label{fl}
|X|=2^n
\quad\text{and}\quad
|C(X)|=n+1.
\end{equation}
\begin{enumerate}
\item[(i)]  The representing tree
\(
T_{ X}=T_{ X}(r_{ X},l_{ X})
\)
of $( X,d)$ is a binary tree.

\item[(ii)]  The equality
\begin{equation*}
\operatorname{lev}_{T_X(r_X)}(v)=n
\end{equation*}
holds for each leaf $v$ of $T_{ X}(r_X)$.

\item[(iii)] The equalities
\begin{equation}
    \label{o1}
D(X)=C(X)=\{\,l_X(v):v\in V(T_X)\,\}=D_p(X)
\end{equation}
hold for each \(p\in X\),
where $D( X)$ is the distance set of $( X,d)$ and
$D_p( X)$ is defined by formula~\eqref{mkkggd}.
\end{enumerate}
\end{lem}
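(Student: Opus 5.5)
The plan is to argue by induction on $n$, using Lemma~\ref{kvcr} as the engine that forces the binary splitting. The argument at every internal node is the same, so the statement should be strengthened to a form that passes to parts. Namely, if $|X| \le 2^n$ and $|C(X)| = n+1$, then $|X| = 2^n$ and (i)--(iii) hold. Theorem~\ref{ter1-1} already gives $|X| \ge 2^{|C(X)|-1} = 2^n$, so the hypothesis $|X| = 2^n$ is exactly the extremal case.

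\textbf{Base case.} Take $n = 1$, so $|X| = 2$ and $|C(X)| = 2$. Then $X = \{a, b\}$. The diametrical graph is $G_X[\{a\}, \{b\}]$, and the root has two leaf successors at level $1$. The sets $D_a(X)$, $D_b(X)$ and $D(X)$ all equal $\{0, d(a,b)\}$, which is the set of labels. So (i)--(iii) hold.

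\textbf{Inductive step, and the main obstacle.} Let $n \ge 2$, and let $G_X = G_X[X_1, \ldots, X_k]$ with $k \ge 2$. Choose $X_{k_0}$ of minimal cardinality. Then $|X_{k_0}| \le 2^n / k \le 2^{n-1}$.

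\begin{itemize}
\item Lemma~\ref{kvcr} gives $|C(X_{k_0})| \ge n$.
\item Theorem~\ref{ter1-1} gives $|X_{k_0}| \ge 2^{|C(X_{k_0})|-1} \ge 2^{n-1}$.
\end{itemize}

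Combining the two bounds forces $|X_{k_0}| = 2^{n-1}$ and $k = 2$. Since $|X_1| + |X_2| = 2^n$ with the minimum equal to $2^{n-1}$, both parts have size $2^{n-1}$. The remaining issue is that the other part $X_j$ also needs centre of size $n$. Lemma~\ref{suul} handles this: $C(X) = \{\operatorname{diam} X\} \cup (C(X_1) \cap C(X_2))$. Since $\operatorname{diam} X_i < \operatorname{diam} X$, the element $\operatorname{diam} X$ is not in $C(X_i)$. Hence $n = |C(X_1) \cap C(X_2)|$, and each $|C(X_i)| \ge n$. Theorem~\ref{ter1-1} applied to $|X_i| = 2^{n-1}$ caps each at $n$. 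So $|C(X_1)| = |C(X_2)| = n$, and in fact $C(X_1) = C(X_2)$. Thus both $X_1$ and $X_2$ are {\bf MCD}-spaces with parameter $n-1$, and the inductive hypothesis applies to them.

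\textbf{Assembling (i) and (ii).} By construction, the representing tree of $X_i$ is the subtree of $T_X$ rooted at the vertex $X_i$, with the restricted labeling. This is because the construction recurses on parts. The root $r_X$ has out-degree $2$, and by induction every internal node below it has out-degree $2$, giving (i). Leaves of $T_{X_i}$ sit at level $n-1$ in $T_{X_i}$, hence at level $n$ in $T_X$, giving (ii).

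\textbf{Assembling (iii).} Take $p \in X$, say $p \in X_1$. The path $P_{\{p\}, r_X}$ is $P_{\{p\}, X_1}$ followed by the edge from $X_1$ to $r_X$. Lemma~\ref{UT2Lemma2} together with the induction hypothesis then gives
\[
D_p(X) = D_p(X_1) \cup \{\operatorname{diam} X\} = C(X_1) \cup \{\operatorname{diam} X\} = C(X),
\]
where the last equality uses $C(X_1) = C(X_2)$ and Lemma~\ref{suul}. The sets in this chain are independent of $p$. Taking the union over $p$ gives $D(X) = C(X)$, since $D(X) = \bigcup_p D_p(X)$. Every vertex of $T_X$ lies on some leaf-to-root path, so by Lemma~\ref{UT2Lemma2} the set of all labels $\{l_X(v) : v \in V(T_X)\}$ equals the union of the $D_p(X)$, which is again $C(X)$. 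This completes (iii).
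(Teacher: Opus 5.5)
Your proof is correct. The splitting step is the paper's: Lemma~\ref{kvcr} together with Theorem~\ref{ter1-1}, applied to a smallest part, forces $k=2$ and $|X_1|=|X_2|=2^{n-1}$.

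The routes diverge after that.

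\begin{itemize}
\item To get $|C(X_1)|=|C(X_2)|=n$, the paper reapplies Lemma~\ref{kvcr}, since both parts now have minimal size. You instead use the exact formula of Lemma~\ref{suul}. This also gives you $C(X_1)=C(X_2)=C(X)\setminus\{\operatorname{diam}X\}$.
\item The larger difference is in (iii). You prove it inductively via $D_p(X)=D_p(X_1)\cup\{\operatorname{diam}X\}$. The paper derives (iii) from (ii) by counting. By Lemma~\ref{UT2Lemma2}, each $D_p(X)$ is the label set of a leaf-to-root path with $n+1$ vertices, so $|D_p(X)|\le n+1=|C(X)|$. Since $C(X)=\bigcap_{p}D_p(X)$ (Lemma~\ref{xute}), every $D_p(X)$ must coincide with $C(X)$.
\end{itemize}

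What each buys: the paper's argument is shorter, and it shows that (iii) follows from (ii) and $|C(X)|=n+1$ alone. Your argument makes the nesting $C(X_1)=C(X_2)=C(X)\setminus\{\operatorname{diam}X\}$ explicit. That nesting is exactly what the induction in Theorem~\ref{m0} needs to pass from $C(X)=C(Y)$ to $C(X_i)=C(Y_i)$. The paper cites only equality of cardinalities at that point.

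Your ordering is also slightly cleaner. You check that $X_1$ and $X_2$ are ${\bf MCD}$-spaces before invoking the induction hypothesis. The paper's proof of (i) uses the hypothesis before the sizes and centers of the parts are established in (ii).

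Finally, the ``strengthened'' statement you announce at the start is equivalent to the original by Theorem~\ref{ter1-1}, and you never use it, so it can be dropped.
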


\begin{proof}
Let $n=1$. Then statements $(i)$--$(iii)$ are evidently valid for each ultrametric space $(X,d)$ that satisfies \eqref{fl}.

Let $n_0 \geq 2$ be an integer number and let statements $(i)$--$(iii)$ hold for all ultrametric spaces $(X,d)$ satisfying \eqref{fl} with $1\leq n<n_0$.

Let us consider an arbitrary ultrametric space $(X,d)$ satisfying \eqref{fl} with $n=n_0$. We must prove that $(i)$--$(iii)$ are valid for this space.

$(i)$. Let us prove the validity of statement $(i)$ for $n=n_0$. To do it we consider the diametrical graph $G_X$ of $(X,d)$. First of all we will show that $G_X$ is a complete bipartite graph.

By Theorem \ref{kmjkusa} the diametrical graph $G_X$ is a complete $k$-partite with $k\ge 2$.  Let $X_1,\dots,X_k$ be the parts of $G_X$ and let $k_0\in\{1,\dots,k\}$ satisfy the equality
\begin{equation}\label{eq:min-part}
|X_{k_0}|=\min_{1\le j\le k}|X_j|.
\end{equation}
Then Lemma \ref{kvcr} implies the inequality
\begin{equation}\label{eq:center-1}
|C(X)|\le 1+|C(X_{k_0})|,
\end{equation}
where $C(X_{k_0})$ is the center of distances of $\bigl(X_{k_0},d|_{X_{k_0}\times X_{k_0}}\bigr)$. It follows from Theorem~\ref{ter1-1} that
\begin{equation*}
|C(X_{k_0})|\le 1+\left\lfloor \log_2 |X_{k_0}| \right\rfloor,
\end{equation*}
and, consequently, we have
\begin{equation}\label{eq:center-3}
|C(X)|\le 2+\left\lfloor \log_2 |X_{k_0}| \right\rfloor
\end{equation}
by \eqref{eq:center-1}.
Definition \ref{edrtaa} and the first equality in \eqref{fl} with $n=n_0$ give us
\begin{equation}\label{eq:W-size}
|X|=2^{n_0}=\sum_{j=1}^{k}|X_j|.
\end{equation}
Hence if $G_X$ is a complete $k$-partite graph with $k\geq 3$, then \eqref{eq:min-part} and \eqref{eq:W-size} imply the strict inequality
\(
|X_{k_0}|<2^{n_0-1},
\)
and, consequently,  the inequality
\begin{equation}\label{eq:log-bound}
\left\lfloor \log_2 |X_{k_0}| \right\rfloor \le n_0-2.
\end{equation}
It follows from \eqref{eq:center-3} and \eqref{eq:log-bound} that
\begin{equation}\label{eq:center-bound}
|C(X)|\le n_0.
\end{equation}
The second equality in \eqref{fl} with $n=n_0$ gives us the equality
\begin{equation*}
|C(X)|=n_0+1,
\end{equation*}
that contradicts \eqref{eq:center-bound}. 

Thus $G_X$ is a complete bipartite graph. Now using induction hypothesis for statement $(i)$, Definition \ref{xxetr} and the algorithm for construction of representing trees we see that $T_X(r_X)$ is a binary rooted tree. Thus statement $(i)$ holds with $n=n_0$.

$(ii)$. Let us prove statement $(ii)$ for $n=n_0$.

It was shown in the proof of statement $(i)$ that the diametrical
graph $G_X$ is complete bipartite. Let $X_1$ and $X_2$ be the parts of
$G_X$, and let $(X_1,d|_{X_1\times X_1})$, $(X_2,d|_{X_2\times X_2})$ be the corresponding subspaces of $(X,d)$.
We claim that the equalities
\begin{equation}\label{eq:e1}
|X_1|=|X_2|=2^{n_0-1}
\end{equation}
and
\begin{equation}\label{eq:e2}
|C(X_1)|=|C(X_2)|=n_0
\end{equation}
hold.

Let us prove \eqref{eq:e1}. If \eqref{eq:e1} is false, then, 
the equalities $|X|=2^{n_0}$ and $|X|=|X_1|+|X_2|$ imply
the strict inequality
\begin{equation*}
\min\{|X_1|,|X_2|\}<2^{n_0-1},
\end{equation*}
that, as in the proof of statement $(i)$, gives us a contradiction with the equality $|C(X)|=n_0+1$. 
Thus \eqref{eq:e1} holds.

Let us prove \eqref{eq:e2}. 

Theorem~\ref{ter1-1} and equalities \eqref{eq:e1} imply
\begin{equation}\label{eq:e4}
|C(X_i)|\le 1+\lfloor \log_2 |X_i| \rfloor = 1+n_0-1=n_0
\end{equation}
for $i=1,2$.
The second equality in \eqref{fl}  with \(n=n_0\) and Lemma~\ref{kvcr} imply
\begin{equation}\label{eq:e5}
n_0+1=|C(X)|\le 1+\min\{|C(X_1)|,|C(X_2)|\}.
\end{equation}
It follows from \eqref{eq:e4} and \eqref{eq:e5} that
\begin{equation}\label{eq:e6}
n_0=\min\{|C(X_1)|,|C(X_2)|\}
\end{equation}
holds.

Equality \eqref{eq:e2} follows from \eqref{eq:e4} and \eqref{eq:e6}.

Let $v$ be an arbitrary leaf of $T_X(r_X)$. To prove statement $(ii)$ with $n=n_0$, we must prove the equality
\begin{equation}
    \label{oo1}
\operatorname{lev}_{T_X(r_X)}(v)=n_0.
\end{equation}

The definition of representing trees implies that the equality
\begin{equation}
    \label{oo2}
    v=\{p\}
\end{equation}
holds for some point $p\in X$.

Let $p \in X$ satisfy equality  \eqref{oo2}. Since $\{X_1,X_2\}$ is a partition of the set $X$, we have either 
\(
p\in X_1\) or \(p\in X_2.
\)
Assume, 
without loss of generality, that $p\in X_1$. Then, using the definition
of $T_{X_1}(r_{X_1},l_{X_1})$, we obtain
\begin{equation}
    \label{oo3}
\{p\}\in L(T_{X_1}(r_{X_1})).
\end{equation}
Relation \eqref{oo3}, the equalities
\begin{equation*}
    |X_1|=2^{n_0-1}, \quad |C(X_1)|=n_0,
\end{equation*}
(see equalities \eqref{eq:e1}, \eqref{eq:e2}) and the induction hypothesis imply the equality
\begin{equation}
    \label{oo4}
\operatorname{lev}_{T_{X_1}(r_{X_1})}(\{p\})=n_0-1.
\end{equation}
It follows from the definition of
\(
T_X=T_X(r_X,l_X)
\)
that
\(
\deg_{T_X}(r_X)=2
\)
and that $r_{X_1},$ $r_{X_2}$ are the direct successors of $r_X$.
Consequently, the equality
\begin{equation}
    \label{oo5}
|E(P_{\{p\},r_X})|
=
1+|E(P_{\{p\},r_{X_1}})|
\end{equation}
holds, where $P_{\{p\},r_X}\subseteq T_X$ and $P_{\{p\},r_{X_1}}\subseteq T_X$ are the paths
joining $\{p\}$ with $r_X$ and, respectively, with $r_{X_1}$.

Now equality \eqref{oo1} follows from equalities \eqref{lev}, \eqref{oo2}, \eqref{oo4}, and \eqref{oo5}.

$(iii)$.
Let us prove statement $(iii)$. Let $p_1$ and $p_2$ be two distinct points of $(X,d)$.
We claim that the equality
\begin{equation}
    \label{av1}
D_{p_1}(X)=D_{p_2}(X) 
\end{equation}
holds, where 
 \(D_{p_1}(X)\) and \(D_{p_2}(X)\) are defined
by formula \eqref{mkkggd} with \(p=p_1\) and \(p=p_2\), respectively.
Indeed, suppose contrary that
\begin{equation}
    \label{vv1}
D_{p_1}(X)\ne D_{p_2}(X).
\end{equation}
Lemma \ref{xute} gives us the equality
\begin{equation}
    \label{v2}
C(X)=\bigcap_{p\in X}D_p(X) 
\end{equation}
and, 
moreover, we have the equalities
\begin{equation}
    \label{v3}
D_{p_1}(X)
=
\left\{
l_X(v) : v\in V(P_{\{p_1\},r_X})
\right\}, 
\quad
D_{p_2}(X)
=
\left\{
l_X(v) : v\in V(P_{\{p_2\},r_X})
\right\}
\end{equation}
by Lemma \ref{UT2Lemma2}, and the equalities
\begin{equation}
    \label{v5}
\operatorname{lev}_{T_X(r_X)}(\{p_1\})=\operatorname{lev}_{T_X(r_X)}(\{p_2\})=n
\end{equation}
by statement $(ii)$.
Using the second equality in \eqref{fl}
and equality \eqref{v2}, we obtain
\begin{equation}
    \label{v6}
|n+1|
=
|C(X)|
=
\big|\bigcap_{p\in X}D_p(X)\big|\leq
|D_{p_1}(X)\cap D_{p_2}(X)|. 
\end{equation}
Equalities \eqref{lev}, \eqref{v3}--\eqref{v5} imply 
\begin{equation}
\label{v7}
|D_{p_1}(X)|\leq |V(P_{\{p_1\},r_X}|=1+|E(P_{\{p_1\},r_X}|=1+n.
\end{equation}
If \eqref{vv1} holds then we also have the strict inequality
\begin{equation*}
|D_{p_1}(X)\cap D_{p_2}(X)|
<
\max\{|D_{p_1}(X)|,\ |D_{p_2}(X)|\},
\end{equation*}
that, together with 
\eqref{v6}--\eqref{v7}, gives the false inequality
\[
n+1<n+1.
\]
Thus equality \eqref{av1} holds for all \(p_1,p_2\in X\).

Now equalities \eqref{o1} follow from equalities \eqref{av1}, \eqref{v2} and
the equality
\[
D(X)=\bigcup_{p\in X}D_p(X).
\]
\end{proof}

Analyzing the proof of Lemma \ref{UT2Lemma3} we obtain the
following.

\begin{lem}
    \label{lemlem}
Let $(X,d)$ be a ${\bf MCD}$-space, let $n\geq 1$ be an
integer number satisfying the equalities
\begin{equation}
    \label{ddaad}
|X|=2^n \quad\text{and}\quad |C(X)|=n+1,
\end{equation}
and let $G(X)$ be the diametrical graph of $(X,d)$. Then
$G(X)$ is complete bipartite with parts $X_1,$ $X_2$ such that
\begin{equation}
    \label{ddad2}
|X_1|=|X_2|=2^{n-1}
\quad\text{and}\quad
|C(X_1)|=|C(X_2)|=n,
\end{equation}
where \(C(X_1)\) and \(C(X_2)\) are the centers of distances of ultrametric spaces
\(
(X_1,d|_{X_1\times X_1})\) and \(
(X_2,d|_{X_2\times X_2}),
\)
respectively.
\end{lem}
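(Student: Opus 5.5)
The plan is to repeat, for an arbitrary $n\ge 1$, the counting argument from the proof of Lemma~\ref{UT2Lemma3}. That argument is not itself inductive, so no induction hypothesis is needed here.

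\emph{Case $n=1$.} Here $|X|=2$. The diametrical graph $G_X$ on two points is a single edge, so it is complete bipartite with parts $X_1$, $X_2$ that are one-point sets. Then $|X_i|=1=2^{0}$. A one-point space has $D(X_i)=\{0\}=C(X_i)$, so $|C(X_i)|=1=n$. Thus \eqref{ddad2} holds.

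\emph{Case $n\ge 2$: only two parts.} By Theorem~\ref{kmjkusa}, $G_X$ is complete $k$-partite with parts $X_1,\dots,X_k$, where $k\ge 2$. Choose $X_{k_0}$ of minimal cardinality. Lemma~\ref{kvcr} and Theorem~\ref{ter1-1} give
\[
n+1=|C(X)|\le 2+\lfloor\log_2|X_{k_0}|\rfloor .
\]
It follows that $|X_{k_0}|\ge 2^{n-1}$. Since $\sum_{j}|X_j|=2^n$ and every part has at least $|X_{k_0}|$ points, we get $k\,2^{n-1}\le 2^n$. Hence $k=2$, and both parts have exactly $2^{n-1}$ points, which is the first equality in \eqref{ddad2}.

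\emph{Centers of the parts.} Theorem~\ref{ter1-1} gives $|C(X_i)|\le 1+(n-1)=n$ for $i=1,2$. Applying Lemma~\ref{kvcr} to each of the two equal-size parts, each being a minimal part, gives $n+1\le 1+|C(X_i)|$. Together these yield $|C(X_i)|=n$.

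I expect no real obstacle. The only point needing care is that Lemma~\ref{kvcr} is stated for a part of minimal cardinality, and since both parts have size $2^{n-1}$, it applies to each of them. Alternatively, Lemma~\ref{suul} gives $C(X)\subseteq\{\operatorname{diam}X\}\cup C(X_i)$ directly, which yields the same inequality.
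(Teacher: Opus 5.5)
Your proof is correct and is essentially the paper's own argument. The paper proves this lemma by pointing back to the counting step in the proof of Lemma~\ref{UT2Lemma3}, which does not use the induction hypothesis: Lemma~\ref{kvcr} and Theorem~\ref{ter1-1} force $k=2$ and $|X_1|=|X_2|=2^{n-1}$, and the same two results then give $|C(X_i)|=n$. Your deviations are cosmetic: you obtain $|X_{k_0}|\ge 2^{n-1}$ directly instead of by contradiction, and you correctly note that both equal-size parts count as minimal parts for Lemma~\ref{kvcr}, or that Lemma~\ref{suul} gives the bound for each part directly.
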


\begin{proof}
It was shown in the proof of statement $(i)$  of Lemma \ref{UT2Lemma3} that
\(G_X\) is complete bipartite if \eqref{ddaad} holds with $n\geq2$.
Moreover, using equalities  \eqref{y1} and \eqref{eq:e1}, we obtain that equalities
\eqref{ddaad} imply \eqref{ddad2} if \(n\geq 2\).

To complete the proof it suffices to note that the conclusion of Lemma \ref{lemlem}
directly follows from Definitions \ref{edrtaa}, \ref{okiu} and Theorem \ref{kmjkusa} if \(n=1\).
\end{proof}

\section{Isometries of {\bf MCD}-spaces and isomorphisms of their representing trees }\label{seq4}

The first our theorem describes the structure of representing trees
of $\bf MCD$-spaces.

\begin{thm}\label{mmre}
Let $T=T(r,l)$ be a finite labeled rooted tree and let $L(T(r))$ be the set of leaves of $T(r)$.
Then the following two statements are equivalent.

\begin{enumerate}
\item[(i)]
The rooted tree $T(r)$ is a perfect binary tree, and the equality
\begin{equation}
    \label{ol1}
l(v)=0
\end{equation}
holds for each $v \in L(T(r))$, and the equivalence
\begin{equation}
    \label{td}
    \left( \operatorname{lev}_{T(r)}(w)=\operatorname{lev}_{T(r)}(x)\right) \iff     \left( l(w)=l(x)\right) 
\end{equation}
is valid for all $w,x\in V(T)$, and, moreover,
the inequality
\begin{equation}\label{fvhjj-1}
l(z)<l(y)
\end{equation}
holds whenever $y,z\in V(T)$ and $z$ is a direct successor of $y$.

\item[(ii)]
There exists a $ {\bf MCD}$-space $(X,d)$ for which the representing tree
\(
T_X=T_X( r_X,l_X)
\)
and
\(
T=T(r,l)
\)
are isomorphic as labeled rooted trees.
\end{enumerate}
\end{thm}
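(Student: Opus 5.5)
The plan is to prove the two implications separately. Theorem~\ref{hds11} will realize a labeled tree as a representing tree, and Lemmas~\ref{UT2Lemma2}, \ref{xute} and \ref{UT2Lemma3} will compute centers of distances from the tree.

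\emph{(ii)$\Rightarrow$(i).} Let $(X,d)$ be an ${\bf MCD}$-space and let $F$ be an isomorphism of $T_X(r_X,l_X)$ and $T(r,l)$. Isomorphisms of labeled rooted trees preserve levels, leaves, out-degrees, direct successors and labels, so it suffices to verify (i) for $T_X(r_X,l_X)$ itself.

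\begin{enumerate}
\item If $|C(X)|=1$, then $|X|=1$. The tree is the single vertex $r_X$ with label $0$, which is trivially perfect binary, and the remaining conditions hold vacuously or trivially.
\item Otherwise $|X|=2^n$ and $|C(X)|=n+1$ with $n\ge 1$. Lemma~\ref{UT2Lemma3}(i),(ii) give that $T_X(r_X)$ is a perfect binary tree with all leaves on level $n$.
\item Leaves are the singletons $\{p\}$, so their labels are $0$, giving \eqref{ol1}. The strict inequality \eqref{fvhjj-1} is part of Theorem~\ref{hds11}(i).
\end{enumerate}

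\emph{Equivalence \eqref{td} for $T_X$.} For every $p\in X$, the path $P_{\{p\},r_X}$ has $n+1$ vertices, one on each level $0,\dots,n$. By \eqref{fvhjj-1} their labels strictly decrease along the path, so they are pairwise distinct. By Lemma~\ref{UT2Lemma2} and \eqref{o1}, these $n+1$ labels form the set $C(X)$, which has exactly $n+1$ elements. Write $c_0>c_1>\dots>c_n=0$ for the elements of $C(X)$. Since the labels on each root-to-leaf path are listed in decreasing order, the vertex on level $j$ has label $c_j$, independently of $p$. Every vertex lies on some root-to-leaf path, so level determines label and distinct levels carry distinct labels $c_j$. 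This proves \eqref{td}.

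\emph{(i)$\Rightarrow$(ii).} Assume (i) and let $n$ be the common level of the leaves. We first check the hypotheses of Theorem~\ref{hds11}(i).
\begin{enumerate}
\item Out-degrees are $0$ or $2$ by Definition~\ref{xxetr}, so none equals $1$.
\item Leaves have label $0$ by \eqref{ol1}.
\item Internal nodes have positive labels, because each internal node has a direct successor with a label $\ge 0$, and \eqref{fvhjj-1} makes its own label strictly larger.
\end{enumerate}
Theorem~\ref{hds11} therefore provides a finite ultrametric space $(X,d)$ whose representing tree is isomorphic to $T(r,l)$. It remains to show that $(X,d)$ is an ${\bf MCD}$-space.

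The leaves of $T_X$ are the singletons, so Lemma~\ref{rgj} gives $|X|=2^n$. By \eqref{td}, the vertices on level $j$ share a common label $c_j$. By \eqref{fvhjj-1}, these satisfy $c_0>c_1>\dots>c_n$, so the $c_j$ are pairwise distinct. Lemma~\ref{UT2Lemma2} then gives $D_p(X)=\{c_0,\dots,c_n\}$ for every $p$, since every leaf lies on level $n$. Lemma~\ref{xute} yields $C(X)=\{c_0,\dots,c_n\}$, so $|C(X)|=n+1$ and $|X|=2^{|C(X)|-1}$. The case $n=0$ is the one-point space.

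\emph{Main obstacle.} The hardest step is the equivalence \eqref{td} in the direction (ii)$\Rightarrow$(i). Its key input is the identity $D_p(X)=C(X)$ from \eqref{o1}, together with the count $|C(X)|=n+1$, which together force the labels along every root-to-leaf path to coincide level by level.
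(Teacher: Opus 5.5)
Your proposal is correct and follows essentially the same route as the paper. For (i)$\Rightarrow$(ii) you use Theorem~\ref{hds11} with Lemmas~\ref{rgj}, \ref{UT2Lemma2} and \ref{xute}; for (ii)$\Rightarrow$(i) you use Lemma~\ref{UT2Lemma3}, matching the strictly monotone labels level by level along root-to-leaf paths. Two steps the paper leaves terse are made explicit in your version: the check that internal nodes carry positive labels, which is needed to invoke Theorem~\ref{hds11}, and the common level values $c_0>\dots>c_n$.
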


\begin{proof} 
 $(i)\Rightarrow(ii)$.
Let $(i)$ hold. Then by Definition \ref{xxetr} there is
an integer number $n\geq 0$ such that
\begin{equation}
\label{e1}
\operatorname{lev}_{T(r)}(v)=n
\end{equation}
for every $v\in L(T(r))$.

If $n=0$, then the equalities
\begin{equation}\label{j1}
V(T)=L(T(r))=\{r\}
\end{equation}
hold by Lemma \ref{pit}. Equalities \eqref{ol1} and \eqref{j1} imply 
\begin{equation*}
l(r)=0.
\end{equation*}
Moreover, for all single-point ultrametric spaces $(X,d)$, the definition of representing trees gives us the equalities
\begin{equation*}
r_X=X, \quad
V(T_X)=L(T_X(r_X))=\{X\}
\end{equation*}
and
\begin{equation}\label{j4}
l_X(r_X)=\operatorname{diam} X=0,
\end{equation}
where $\{X\}$ is a singleton with the unique point $X$.
Since each single-point $(X,d)$ belongs to ${\bf MCD}$, equalities \eqref{j1}--\eqref{j4} imply the validity of the implication
\(
(i)\Rightarrow(ii) 
\) for $n=0$.

Let us consider the case when 
\begin{equation}
    \label{ccc}
    n\geq 1.
\end{equation}
Statement $(i)$ and Theorem~\ref{hds11} imply the existence of finite ultrametric space $(X,d)$ such that
\(
T_X=T_X(r_X,l_X)
\)
and
\(
T=T(r,l)
\)
are isomorphic as labeled rooted trees. We must show that $(X,d)$ is a ${\bf MCD}$-space.

Let us do it. 
 Let  \(L(T_X(r_X))\) be the set of leaves of \( T_X(r_X)\). Since $T_X(r_X,l_X)$ and $T(r,l)$ are isomorphic labeled rooted trees, the equality
\begin{equation}\label{d2}
|L(T_X(r_X))|=|L(T(r))|
\end{equation}
holds.
By \eqref{e1} we have $\operatorname{lev}_{T(r)}(v)=n$ for each  $v \in L(T(r))$ and, consequently,
 we also have the equality
\begin{equation*}
|L\!\left(T(r)\right)|=2^n
\end{equation*}
by Lemma \ref{rgj}.
It follows directly from the definition of representing trees that
\begin{equation*}
L\!\left(T_X(r_X)\right)=\{\{p\}:\, p\in X\}.
\end{equation*}
Hence the equality 
\begin{equation}\label{d4}
|L\!\left(T_X(r_X)\right)|=|X|
\end{equation}
holds.
Equalities \eqref{d2}--\eqref{d4} imply the  equality
\begin{equation}
    \label{ol2}
|X|=2^n.
\end{equation}

Let us prove the equality
\begin{equation}
\label{ol3}
|C(X)|=n+1.
\end{equation}

It follows from \eqref{ccc} and \eqref{ol2}
that the inequality \(|X|\ge2\) holds and,
consequently, we can use
Lemma~\ref{UT2Lemma2}. This
lemma gives us the equality
\begin{equation}
    \label{c1}
D_p(X)=\{\,l_X(u):u\in V(P_{\{p\},r_X})\,\}
\end{equation}
for each \(p\in X\), where
\(
D_p(X):=\{\,d(p,x):x\in X\,\}
\)
and \(P_{\{p\},r_X}\) is the path joining the leaf \(\{p\}\) with the root
\(r_X\) in \(T_X(r_X,l_X)\).
By Lemma~\ref{xute} we have the equality
\begin{equation}
    \label{c2}
C(X)=\bigcap_{p\in X} D_p(X).
\end{equation}
It follows from \eqref{c1} and \eqref{c2} that
\begin{equation}\label{c3}
C(X)=\bigcap_{p\in X} \{l_X(u): u\in V(P_{\{p\},r_X})\}.
\end{equation}
Since \eqref{e1} holds for every $v\in L(T(r))$, equivalence \eqref{td} gives us the equality
\begin{equation}
    \label{c4}
\{l(u):u\in V(P_{v_1,r})\}
=
\{l(u):u\in V(P_{v_2,r})\}
\end{equation}
for any two leaves \(v_1,v_2\) of \(T(r,l)\). 
Equality  \eqref{c4} and isomorphism of \(T(r,l)\) and \(T_X(r_{X},l_X)\) imply  
\[
\{l_X(u):u\in V(P_{\{p_1\},r_X})\}
=
\{l_X(u):u\in V(P_{\{p_2\},r_X})\}
\]
for any two points \(p_1,p_2\in X\).
Consequently we can rewrite \eqref{c3} in the form
\begin{equation}
    \label{x1}
C(X)=\{l_X(u):u\in V(P_{\{p\},r_X})\},
\end{equation}
where \(p\) is an arbitrary point of \(X\).

Once again, since 
\(T(r,l)\) and \(T_X(r_X,l_X)\) are isomorphic, equality \eqref{x1} implies
\[
C(X)=\{\,l(u): u\in V(P_{v,r})\,\}
\]
for every leaf \(v\) of \(T(r,l)\).
Hence, the equality
\begin{equation}\label{wsq}
|C(X)|=\bigl|\{l(u):u\in V(P_{v,r})\}\bigr|
\end{equation}
holds for every leaf \(v\) of \(T(r,l)\). Let us consider an arbitrary leaf $v_0$ of $T(r,l)$.
It follows from \eqref{fvhjj-1} that
\[
l(u_1)\neq l(u_2)
\]
holds whenever \(u_1,u_2\) are distinct vertices of the path
\(P_{v_0,r}\). The relation $(X,d)\in {\bf MCD}$ implies the existence of integer $m\geq 0$ such that
\begin{equation}\label{ggd}
|C(X)|=|V(P_{v_0,r})|
\end{equation}
holds by \eqref{wsq}.
Since the equality
\(
|V(P)|=1+|E(P)|
\)
holds for each path $P$, \eqref{lev} and \eqref{ggd} imply
\[
|C(X)|=1+\operatorname{lev}_{T(r)}(v_0).
\]
The last equality and equality \eqref{e1} give
us \eqref{ol3}.

The relation $(X,d)\in {\bf MCD}$ follows from \eqref{ol2} and \eqref{ol3} by definition of ${\bf MCD}$-spaces.

Thus statement $(i)$ implies statement $(ii)$.

$(ii)\Rightarrow(i).$
Let \((X,d)\) be a ${\bf MCD}$-space such that the representing
tree
\(
T_X=T_X(r_X,l_X)
\)
is isomorphic to
\(
T=T(r,l).
\)

Since $(X,d)$ is a ${\bf MCD}$-space, there exists an integer $m \geq 0$ such that
\begin{equation}
    \label{e2}
|X| = 2^m
\quad\text{and}\quad
|C(X)| = m + 1.
\end{equation}
If  $m=0$ holds, then arguing as in the proof of the validity
$(i)\Rightarrow(ii)$ for $n=0$, we can show that the implication
$(ii)\Rightarrow(i)$ is true.

Let us consider the case when 
\begin{equation}
    \label{e3}
    m\geq 1.
\end{equation}

The first equality in \eqref{e2} and inequality \eqref{e3} imply
 $|X|\geq 2$ and, consequently, we can use Lemma \ref{UT2Lemma3}.

By Lemma \ref{UT2Lemma3}, $T_X(r_X)$ is a perfect binary tree. Hence $T(r)$ also is a perfect binary rooted tree, since $T_X(r_X,l_X)$ and $T(r,l)$ are isomorphic as labeled rooted trees. By definition of representing trees we have
\begin{equation}\label{k2}
    l_X(\{p\})=0
\end{equation}
for each $p\in X$. Since 
\begin{equation*}
    L(T_X(r_X))=\{\{p\}:\,p\in X\}
\end{equation*}
holds, \eqref{k2} implies \eqref{ol1} for every $v\in L(T(r))$.

Let $x$ and $z$ be  vertices of $T(r)$  and let \(z\) be a direct successor of
\(x\). Then inequality \eqref{fvhjj-1} follows from Theorem~\ref{hds11}, because $T(r,l)$ and $T_X(r_X,l_X)$ are isomorphic as labeled rooted trees.

Thus, to complete the proof of  validity of the implication
\(
(ii)\Rightarrow(i)
\)
it suffices to show that equivalence \eqref{td}
is valid for all \(w,u\in V(T)\).

Since $T(r,l)$ and $T_{X}(r_{X},l_{X})$ are isomorphic labelled rooted trees, equivalence \eqref{td} is valid for all $u, w \in V(T)$ if and only if 
\begin{equation}
    \label{sss}
    \left( \operatorname{lev}_{T_X(r_X)}(w_1)=\operatorname{lev}_{T_X(r_X)}(w_2)\right) \iff     \left( l_X(w_1)=l_X(w_2)\right) 
\end{equation}
is valid for all \(w_1,w_2\in V(T_X)\).

Let us consider two arbitrary vertices $w_1$ and $w_2$ of ${T}_{X}$. Then, using inequality \eqref{e3} and the definition of representing trees, we can find
points $p_{1},p_{2}\in X$ such that
\(
w_1\in V(P_{\{p_1\},r_X})\)
and
\(
w_2\in V(P_{\{p_2\},r_X})\).

Statement $(iii)$ of Lemma~\ref{UT2Lemma3} implies 
the equality

\begin{equation}
    \label{k3}
\{d(p_{1},x):x\in X\}
=
\{d(p_{2},x):x\in X\}.
\end{equation}
By Lemma~\ref{UT2Lemma2} we have the equality
\[
\{d(p,x):x\in X\}
=
\{l_{X}(u):u\in  V(P_{\{p\},r_{X}})\}
\]
for each \(p\in X\).
The last equality and \eqref{k3} imply
\begin{equation*}
\{l_{X}(u):u\in V(P_{\{p_1\},r_X})\}
=
\{l_{X}(u):u\in V(P_{\{p_2\},r_X})\}.
\end{equation*}
By statement $(ii)$ of Lemma \ref{UT2Lemma3} we also have
\begin{equation}
    \label{ee1}
\operatorname{lev}_{T_X(r_X)}(\{p_1\})=\operatorname{lev}_{T_X(r_X)}(\{p_2\})=m.
\end{equation}

The vertices of the paths \(P_{\{p_1\},r_X}\) and \(P_{\{p_2\},r_X}\) can be numbered such that
\begin{equation*}
V(P_{\{p_i\},r_X})
 =\{v_0^i,\ldots,v_m^i\},
\quad
E(P_{\{p_i\},r_X})
 =\{\{v_0^i,v_1^i\},\ldots,\{v_{m-1}^i,v_m^i\}\},
\end{equation*}
where $i=1,2$, and
\begin{equation}
    \label{rer4}
v_0^1=\{p_1\},\quad
v_0^2=\{p_2\},\quad
v_m^1=v_m^2=r_X
\end{equation}
(see formulas \eqref{gasl}). 
Now, using \eqref{lev}--\eqref{p1} and \eqref{ee1}--\eqref{rer4}, we obtain the equality
\begin{equation}\label{ee4}
\operatorname{lev}_{T_X(r_X)}(v_k^i)=m-k
\end{equation}
for all $k\in\{0,\ldots,m\}$ and $ i\in \{1,2\}.$
Moreover, the definition of representing trees gives us 
the equalities
\begin{equation*}
l_X(v_0^1)=l_X(v_0^2)=0
\end{equation*}
and
the strict inequality
\begin{equation}\label{ee6}
l_X(v_{k+1}^i)>l_X(v_k^i)
\end{equation}
for all $k\in\{0,\ldots,m\}$ and $ i\in \{1,2\}.$

The validity of equivalence \eqref{sss} follows from \eqref{ee4}--\eqref{ee6}.

\end{proof}

The following theorem gives us a solution of Problem \ref{qwed}.

\begin{thm}\label{m0}
Let $(X,d)$ and $(Y,\rho)$ be $\bf MCD$-spaces.
Then the following statements are equivalent.
\begin{enumerate}
\item $(X,d)$ and $(Y,\rho)$ are isometric.

\item The representing trees
\(
T_X=T_X(r_X,l_X)
\) and
\(T_Y=T_Y(r_Y,l_Y)
\)
are isomorphic as labeled rooted trees.

\item The trees $T_X(l_X)$ and $T_Y(l_Y)$ are isomorphic as labeled trees.

\item The equality
\begin{equation}
    \label{m2}
C(X)=C(Y)
\end{equation}
holds.

\item The equality
\begin{equation*}
D(X)=D(Y)
\end{equation*}
holds.
\end{enumerate}
\end{thm}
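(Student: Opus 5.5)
The plan is to prove the cycle of implications $(i)\Leftrightarrow(ii)\Leftrightarrow(iii)$, then $(i)\Rightarrow(v)\Rightarrow(iv)\Rightarrow(ii)$. The equivalence $(i)\Leftrightarrow(ii)$ is exactly Theorem~\ref{rvqqbb}. The equivalence $(ii)\Leftrightarrow(iii)$ is Proposition~\ref{effrt}. The implication $(i)\Rightarrow(v)$ is trivial, since an isometry preserves the distance set. For $(v)\Rightarrow(iv)$, statement $(iii)$ of Lemma~\ref{UT2Lemma3} gives $D(X)=C(X)$ and $D(Y)=C(Y)$ whenever $|X|,|Y|\ge 2$. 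In the single-point case both sets equal $\{0\}$, because every one-point space is an ${\bf MCD}$-space with $D=C=\{0\}$.

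The main step is $(iv)\Rightarrow(ii)$. Suppose $C(X)=C(Y)$. Since both spaces are ${\bf MCD}$, we have $|X|=2^{|C(X)|-1}=2^{|C(Y)|-1}=|Y|$, so both spaces have cardinality $2^n$ with $n=|C(X)|-1$. If $n=0$ the claim is trivial, since both trees are the single vertex with label $0$.

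For $n\ge 1$, Theorem~\ref{mmre} shows that $T_X(r_X)$ and $T_Y(r_Y)$ are perfect binary trees with all leaves at level $n$ (Lemma~\ref{UT2Lemma3}$(ii)$). The same theorem shows that the labels depend only on the level and strictly decrease along each root-to-leaf path. Hence there are numbers $c_0>c_1>\dots>c_n=0$ with $l_X(v)=c_k$ whenever $\operatorname{lev}_{T_X(r_X)}(v)=k$. Likewise there are numbers $c'_0>\dots>c'_n=0$ for $T_Y$.

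By Lemma~\ref{UT2Lemma3}$(iii)$, $C(X)=\{l_X(v):v\in V(T_X)\}=\{c_0,\dots,c_n\}$, and similarly $C(Y)=\{c'_0,\dots,c'_n\}$. Both sets have $n+1$ elements listed in strictly decreasing order, so $C(X)=C(Y)$ forces $c_k=c'_k$ for every $k$.

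It remains to build an isomorphism of the labeled rooted trees. Any two perfect binary trees of the same height $n$ are isomorphic as rooted trees by a level-preserving map. I would construct it by induction on $n$ using Theorem~\ref{gjl}: remove the roots, match the two components pairwise, and note that each component is a perfect binary tree of height $n-1$. Because this map preserves levels and labels are determined by level with $c_k=c'_k$, it also preserves labels. This gives $(ii)$ and closes the cycle.

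I expect the only delicate point to be ensuring the rooted-tree isomorphism preserves levels. Routing the inductive construction through Theorem~\ref{gjl} handles this directly.
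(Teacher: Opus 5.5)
Your proof is correct, but its key step differs from the paper's. The paper proves $(iv)\Rightarrow(i)$ directly on the spaces. By induction on $n$, it uses Lemma~\ref{lemlem} to split $X$ and $Y$ into the two parts of their complete bipartite diametrical graphs. It then takes isometries $X_i\to Y_i$ from the induction hypothesis and glues them; the cross distances agree because $\operatorname{diam}X=\max D(X)=\max D(Y)=\operatorname{diam}Y$.

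You instead prove $(iv)\Rightarrow(ii)$ on the trees. Theorem~\ref{mmre} and Lemma~\ref{UT2Lemma3} show that the labeled representing tree of an ${\bf MCD}$-space is determined, up to isomorphism, by its height and the decreasing enumeration of $C(X)$. So $C(X)=C(Y)$ forces $c_k=c'_k$, any rooted isomorphism of the two perfect binary trees preserves labels, and Theorem~\ref{rvqqbb} then gives $(i)$.

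Your route uses heavier machinery (Theorems~\ref{mmre} and~\ref{rvqqbb}), but it shows transparently why the center determines the space. The paper's route is more elementary and constructs the isometry explicitly. However, its induction step tacitly needs $C(X_i)=C(Y_j)$, not just $|C(X_i)|=|C(Y_j)|=n_0$. This follows from Lemma~\ref{suul}, which together with $|C(X_i)|=n_0$ gives $C(X_1)=C(X_2)=C(X)\setminus\{\operatorname{diam}X\}$; your argument never meets this issue.

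Also, the point you call delicate is automatic: every isomorphism of rooted trees preserves levels. It carries the unique path from $v$ to the root onto the unique path from $F(v)$ to the other root.
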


\begin{proof} The equivalence 
$(i)\iff (ii)$ is valid by Theorem~\ref{rvqqbb}.

The validity of the equivalence $(ii) \iff (iii)$ follows from Proposition~\ref{effrt}.

Statement~$(iii)$ of Lemma~\ref{UT2Lemma3} implies the validity of 
\(
(iv)\iff(v).
\)
 It follows from Definition \ref{zcgh572} that the centers of distances
of isometric metric spaces are the same. Hence the implication
$(i) \Rightarrow (iv)$ is valid also. 

Thus to complete the proof  it suffices
to prove the validity of the implication
$(iv) \Rightarrow (i).$

Let statement $(iv)$ hold.
Then equality \eqref{m2} implies the existence of an integer $n \ge 0$ such that
\begin{equation}
    \label{v1-s}
|C(X)| = |C(Y)| = n+1. 
\end{equation}

We will prove that $(X,d)$ and $(Y,\rho)$ are isometric by induction.
If $n=0$ then we have 
\begin{equation*}
|X| = |Y| = 1,
\end{equation*}
since the definition of ${\bf MCD}$-spaces and \eqref{v1-s} imply 
\begin{equation*}
|X| = |Y| = 2^n=1.
\end{equation*}
Since all
single-point metric spaces are isometric, statement $(i)$ is valid for  $n=0.$

Let us consider the case $n\geq 1$. Let $n_0 \geq 1$ be an integer number and let statement $(i)$ hold for all ${\bf MCD}$-spaces $(X,d)$, $(Y,\rho)$ satisfying \eqref{v1-s} with $n<n_0$. 

By Lemma \ref{lemlem} the diametrical graphs
$G_X$ and $G_Y$ are complete bipartite. Let $X_1,$ $X_2$ be the parts of
$G_X$, and $Y_1,$ $Y_2$ be the parts of $G_Y$. By Lemma \ref{lemlem} the
ultrametric spaces
\(
(X_1,d|_{X_1\times X_1}),\)
\((X_2,d|_{X_2\times X_2}),\)
\((Y_1,\rho|_{Y_1\times Y_1}),\)
\((Y_2,\rho|_{Y_2\times Y_2})
\)
belong to ${\bf MCD}$ and the equalities
\begin{equation*}
    |X_i|=|Y_i|=2^{n_0-1},
\end{equation*}
\begin{equation}
    \label{ddaad4}
    |C(X_i)|=|C(Y_i)|=n_0
\end{equation}
hold for $i=1,2$ if \eqref{v1-s} holds with $n=n_0$.
The induction hypothesis and equalities \eqref{ddaad4} imply that all spaces
\(
(X_1,d|_{X_1\times X_1}),
\)
\(
(X_2,d|_{X_2\times X_2}),
\)
\(
(Y_1,\rho|_{Y_1\times Y_1})
\) and \(
(Y_2,\rho|_{Y_2\times Y_2})
\)
are isometric. Let
\(
\Phi_1:X_1\to Y_1
\)
and
\(
\Phi_2:X_2\to Y_2
\)
be the corresponding isometries and let a mapping 
\(
\Phi:X\to Y
\)
be defined as
\begin{equation}
    \label{ddaad5}
\Phi(p):=
\begin{cases}
\Phi_1(p), & \text{if }p\in X_1,\\
\Phi_2(p), & \text{if }p\in X_2.
\end{cases}
\end{equation}
The mapping $\Phi:X\to Y$ is an isometry of $(X,d)$ and $(Y,\rho)$ if and only if the equality
\begin{equation}
    \label{ddaad6}
d(p,q)=\rho(\Phi(p),\Phi(q))
\end{equation}
holds for all $p,q\in X$.
Let us prove equality \eqref{ddaad6}. If
\(
p,q\in X_1 \) or
\( p,q\in X_2,
\)
equality \eqref{ddaad6} follows from \eqref{ddaad5}, since
\(
\Phi_1:X_1\to Y_1
\) and \(
\Phi_2:X_2\to Y_2
\)
are isometries. Suppose that $p$, $q$ belong to distinct parts $X_1,$ $X_2$
of $G_X$. Then, using  equality \eqref{ddaad5}, we see that
\(
\Phi(p)\), \(\Phi(q)
\)
belong to distinct parts $Y_1,$ $Y_2$ of $G_Y$. Now,
Definitions \ref{edrtaa}, \ref{okiu} imply the equalities 
\begin{equation}
    \label{ttr1}
d(p,q)=\operatorname{diam} X,
\end{equation}
\begin{equation}
    \label{ttr2}
\rho(\Phi(p),\Phi(q))
=\operatorname{diam} Y. 
\end{equation}
By Lemma \ref{UT2Lemma3} the equalities
\begin{equation*}
C(X)=D(X),\quad C(Y)=D(Y) 
\end{equation*}
hold and, consequently, we have
\begin{equation*}
    D(X)=D(Y)
\end{equation*}
by equality \eqref{m2}.
The last equality and the definition of diameter of metric spaces imply
\[
\operatorname{diam} X
=
\max\{t:t\in D(X)\}
=
\max\{t:t\in D(Y)\}
=
\operatorname{diam} Y.
\]
Equality \eqref{ddaad6} follows from \eqref{ttr1}, \eqref{ttr2}.

Thus the implication
\(
(iv)\Rightarrow(i)
\)
is valid.
\end{proof}

 Theorem \ref{m0} implies the following corollary.

\begin{cor} \label{UT2Cor7}
Let $n\geq 1$ be an integer number and let $(X,d)$ and $(Y,\rho)$ be ultrametric spaces such that
\begin{equation}\label{dcfpq}
|X| = |Y| = 2^n \quad \text{\it and} \quad  |C(X)| = n+1.
\end{equation}
Then the equality
    \begin{equation}
        \label{hi1}
    C(Y) = C(X)
        \end{equation}
holds if and only if $(X,d)$ and $(Y,\rho)$ are isometric.
\end{cor}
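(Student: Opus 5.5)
By \eqref{dcfpq}, $(X,d)$ is an ${\bf MCD}$-space. To apply Theorem~\ref{m0} we only need to show that $(Y,\rho)$ is also an ${\bf MCD}$-space; then the equivalence $(i)\iff(iv)$ of that theorem is exactly the claim. Note that \eqref{dcfpq} says nothing directly about $|C(Y)|$, so this has to be extracted from the hypotheses in each direction separately.

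\emph{Sufficiency.} Assume \eqref{hi1} holds. Then
\[
|C(Y)|=|C(X)|=n+1 \quad\text{and}\quad |Y|=2^n=2^{|C(Y)|-1},
\]
so $(Y,\rho)\in{\bf MCD}$. Both spaces are now ${\bf MCD}$-spaces with $C(X)=C(Y)$, so the implication $(iv)\Rightarrow(i)$ of Theorem~\ref{m0} shows that $(X,d)$ and $(Y,\rho)$ are isometric.

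\emph{Necessity.} Assume $\Phi\colon X\to Y$ is an isometry. Since $\Phi$ is a distance-preserving bijection, Definition~\ref{zcgh572} gives $D(X)=D(Y)$. For $t\in C(X)$ and $q\in Y$, put $p=\Phi^{-1}(q)$ and choose $x\in X$ with $d(p,x)=t$; then $\rho(q,\Phi(x))=t$, so $t\in C(Y)$. The symmetric argument with $\Phi^{-1}$ gives $C(Y)\subseteq C(X)$, hence \eqref{hi1}. In particular $|C(Y)|=n+1$, so $Y$ is ${\bf MCD}$ as well, although this is not needed for this direction.

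There is no real obstacle here. The only point requiring care is that Theorem~\ref{m0} presupposes both spaces are ${\bf MCD}$, and for $Y$ this must be derived from \eqref{hi1} together with $|Y|=2^n$ rather than assumed.
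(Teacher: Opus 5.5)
Your proposal is correct and follows essentially the same route as the paper's proof. In both, $C(Y)=C(X)$ together with $|Y|=2^n$ makes $(Y,\rho)$ an ${\bf MCD}$-space, so Theorem~\ref{m0} applies. For the converse you explicitly check that isometries preserve the center of distances, which the paper simply calls trivial.
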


\begin{proof}
Equality \eqref{hi1} is trivially valid if $(X,d)$ and $(Y,\rho)$ are isometric.

To prove that this equality implies the isometricity of $(X,d)$ and $(Y,\rho)$, we note that \eqref{dcfpq}  and \eqref{hi1} give us 
\begin{equation*}
    |X|=|Y|=2^n \quad \text{and} \quad |C(X)|=|C(Y)|=n+1.
\end{equation*}
Hence, the equalities
\[
|X|=2^{\,|C(X)|-1} \quad \text{and} \quad |Y|=2^{\,|C(Y)|-1}
\]
hold, and, consequently, $(X,d)$ and $(Y,\rho)$ are {\bf MCD}-spaces.
Thus $(X,d)$ and $(Y,\rho)$ are isometric by Theorem~\ref{m0}.
\end{proof}

\begin{rem}
Corollary \ref{UT2Cor7} was formulated as  Conjecture~4.3 in \cite{DovgosheyRovenska2026}. Thus this conjecture is valid.
\end{rem}

\begin{prop}
     \label{UT2Cor8}
Let $A$ be a finite subset of $\mathbb{R}^+$ and let $0 \in A$. Then, up to isometry, there exists a unique ultrametric space $(X,d)\in {\bf MCD}$ such that
\begin{equation}\label{dse}
    D(X) = C(X) = A.
\end{equation}
\end{prop}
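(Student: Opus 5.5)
Uniqueness will follow directly from Theorem~\ref{m0}: if $(X,d),(Y,\rho)\in{\bf MCD}$ both satisfy \eqref{dse}, then $C(X)=A=C(Y)$, so statement $(iv)$ of that theorem holds and therefore $(X,d)$ and $(Y,\rho)$ are isometric. So the work is existence, and for that I will build a labeled rooted tree satisfying condition $(i)$ of Theorem~\ref{mmre} and then invoke that theorem.

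Write $A=\{a_0,a_1,\dots,a_n\}$ with $0=a_0<a_1<\dots<a_n$, where $n=|A|-1\ge 0$. Let $T(r)$ be a perfect binary tree all of whose leaves have level $n$; such a tree exists, for example the tree whose vertices are the binary words of length at most $n$, rooted at the empty word, with each word joined to its two one-letter extensions. Define the labeling by $l(v):=a_{\,n-\operatorname{lev}_{T(r)}(v)}$. Then every leaf (level $n$) has label $a_0=0$, which gives \eqref{ol1}. Since $k\mapsto a_{n-k}$ is injective on $\{0,\dots,n\}$, equivalence \eqref{td} holds. A direct successor $z$ of $y$ has level one larger, so $l(z)=a_{n-\operatorname{lev}(y)-1}<a_{n-\operatorname{lev}(y)}=l(y)$, giving \eqref{fvhjj-1}. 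Thus statement $(i)$ of Theorem~\ref{mmre} holds, and by that theorem there is $(X,d)\in{\bf MCD}$ whose representing tree $T_X(r_X,l_X)$ is isomorphic to $T(r,l)$ as a labeled rooted tree.

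It remains to identify $C(X)$. Statement $(iii)$ of Lemma~\ref{UT2Lemma3} gives $D(X)=C(X)=\{l_X(v):v\in V(T_X)\}$ when $n\ge1$. Since isomorphisms of labeled trees preserve labels, this set equals $\{l(v):v\in V(T)\}$. Every level $0,\dots,n$ is occupied in $T$ (the path from any leaf to $r$ meets each level), so this set is exactly $A$. In the case $n=0$ we have $A=\{0\}$, and any single-point space satisfies \eqref{dse} directly.

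The only point needing care is this identification of the label set with $A$ via the isomorphism; the rest is bookkeeping with Theorem~\ref{mmre}, Lemma~\ref{UT2Lemma3} and Theorem~\ref{m0}.
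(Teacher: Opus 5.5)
Your proof is correct and follows the paper's argument: you use the same perfect binary tree with all leaves at level $n$, label each level by the corresponding element of $A$, and then apply Theorem~\ref{mmre} for existence, Theorem~\ref{m0} for uniqueness, and Lemma~\ref{UT2Lemma3} to get $D(X)=C(X)=A$. The only minor difference is how you identify $C(X)$: you read it off directly from the label set in Lemma~\ref{UT2Lemma3}$(iii)$, whereas the paper computes $D_p(X)$ via Lemma~\ref{UT2Lemma2} and intersects over $p$ using Lemma~\ref{xute}.
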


\begin{proof}
    If $A = \{0\}$ holds, then any one-point ultrametric space $(X,d)$ serves the required purpose.

Let the inequality $|A| \ge 2$ hold.
In this case there is a numbering the points of $A$ such that
\begin{equation*}
A=\{a_0,\ldots,a_n\}
\end{equation*}
and
\begin{equation*}
a_0>a_1>\ldots>a_n=0.
\end{equation*}

    Let us consider a perfect binary tree  $T=T(r)$ such that
the level of every leaf of $T(r)$ is exactly $n$
    and define a labeling $l: V(T)\to \mathbb{R}^+$ such that 
\begin{equation}\label{fgqnhy}
\bigl(\operatorname{lev}_{T(r)}(u)=k\bigr)
\iff
\left(
l(u)=a_k
\right)
\end{equation}
for all $u\in V(T)$
and $a_k\in\{a_0,\ldots,a_n\}.$

Then $T=T(r,l)$ satisfies statement $(i)$ of Theorem~\ref{mmre} and, consequently, there exists 
\(
(X,d)\in {\bf MCD}
\)
such that
\(
T_X=T_X(r_X,l_X)
\)
and
\(
T=T(r,l)
\)
are isomorphic as labeled rooted trees. By Theorem~\ref{m0} the ultrametric space $(X,d)$ is unique up to isometry.
Thus, to complete the proof it suffices to show that equalities~\eqref{dse} hold.

Lemma~\ref{UT2Lemma2} and equivalence~\eqref{fgqnhy}  imply the equality
\begin{equation*}
D_p(X)=\{a_0,\ldots,a_n\},
\end{equation*}
where $D_p(X):=\{d(x,p):x\in X\}$
for each
$p\in X$.
Consequently, we have
\begin{equation}\label{eq:CX}
C(X)=\{a_0,\ldots,a_n\}
\end{equation}
by Lemma~\ref{xute}. Since $(X,d)$ is a ${\bf MCD}$-space, Lemma \ref{UT2Lemma3} implies the equality
\begin{equation}\label{eq:CD}
C(X)=D(X).
\end{equation}

Now \eqref{eq:CX} and \eqref{eq:CD} give us \eqref{dse}.
\end{proof}

Proposition \ref{UT2Cor8} implies the following corollary.

\begin{cor}\label{ee}
Let $A$ be
a finite subset of $\mathbb{R}^{+}$ such
that $0\in A$. Then there exists
a finite ultrametric space $(X,d)$
that satisfies the
equalities
\[
D(X)=C(X)=A.
\]
\end{cor}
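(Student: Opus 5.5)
The plan is to derive Corollary~\ref{ee} directly from Proposition~\ref{UT2Cor8}. The corollary asks only for existence, and Proposition~\ref{UT2Cor8} gives existence together with uniqueness up to isometry. So the argument amounts to checking that the hypotheses match and then discarding the uniqueness part.

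First, note that the hypotheses of Corollary~\ref{ee} are exactly those of Proposition~\ref{UT2Cor8}: $A$ is a finite subset of $\mathbb{R}^{+}$ and $0\in A$. Second, apply Proposition~\ref{UT2Cor8}. It yields an ultrametric space $(X,d)\in{\bf MCD}$ satisfying $D(X)=C(X)=A$. Third, observe that every ${\bf MCD}$-space is, by definition in Problem~\ref{qwed}, a finite ultrametric space, so $(X,d)$ is finite. (Alternatively, finiteness follows from $|X|=2^{|C(X)|-1}=2^{|A|-1}$.) This $(X,d)$ is therefore the required space.

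For the case $A=\{0\}$, a one-point space works directly, as already noted in the proof of Proposition~\ref{UT2Cor8}. No separate treatment is needed, since the proposition covers this case as well.

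There is no real obstacle here. The only point requiring care is to confirm that the space supplied by Proposition~\ref{UT2Cor8} is finite. This holds because membership in ${\bf MCD}$ is defined only for finite ultrametric spaces.
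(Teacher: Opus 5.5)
Your proposal is correct and matches the paper exactly: the paper obtains Corollary~\ref{ee} as an immediate consequence of Proposition~\ref{UT2Cor8}, discarding the uniqueness part just as you do. Your remark on finiteness is also right, since ${\bf MCD}$ is defined as a class of finite ultrametric spaces.
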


\begin{rem} The above Corollary \ref{ee} was formulated in~\cite{DovgosheyRovenska2026} as Conjecture~4.4.
Thus this conjecture is valid.
\end{rem}

\section{Problems and expected results}\label{seq5}

Following \cite[p.~38]{Dez-Dez} we say that a metric space $(X,d)$ is {\it homogeneous} if, for any two points $x,y\in X$, there exists a self-isometry $F : X \to X$ satisfying the equality $F(x)=y$.

The authors believe the following conjecture is true.

\begin{con}\label{llp} Let $(X,d)$ be a ${\bf MCD}$-space and let $\operatorname{Is}(X)$ be the set of all self-isometries of $(X,d)$. Then the following statements hold.
\begin{enumerate}
    \item[(i)] $(X,d)$ is homogeneous.
    \item[(ii)] The equality
    \begin{equation}
        \label{lli}
      \log_2  |\operatorname{Is}(X)| = |X|-1
        \end{equation}
    holds.
\end{enumerate}
\end{con}

\begin{con}[Prove or disprove]
Let $(X,d)$ be a finite homogeneous ultrametric space. Then equality \eqref{lli} holds if and only if $(X,d)\in {\bf MCD}$.
\end{con}

For each metric space $(X,d)$, the set $\operatorname{Is}(X)$ of all self-isometries of $(X,d)$ is a group with respect to the operation of composition of self-isometries of $(X,d)$.

\begin{prob}
    \label{gapl}
Describe the structure of the groups $\operatorname{Is}(X)$ of ${\bf MCD}$-spaces $(X,d)$ up to isomorphism.
\end{prob}

The following problem is closely related with Corollary \ref{ee}.

\begin{prob}
Let $b_0,\ldots,b_n$ be a finite sequence of positive integer numbers. Find conditions under which there exist a finite ultrametric space $(X,d)$ having the center of distances
\[
C(X)=\{c_0,\ldots,c_n\}, \quad 0=c_0<\cdots<c_n,
\]
such that the equality
\begin{equation}
    \label{gaplll}
b_i=\left|\{x\in X:d(p,x)=c_i\}\right|
\end{equation}
holds for all $p\in X$ and $i\in\{0,\ldots,n\}.$
\end{prob}

\begin{con}
    \label{tcedg}
    Let $(X,d)$ be a finite ultrametric space
with the center of distances
\(
C(X)=\{c_0,\ldots,c_n\}, \) where \(0=c_0<\cdots<c_n.
\)
Then following statements are equivalent.
\begin{enumerate}
    \item  
    Let
\(
b_0=1,\) \(b_1= 2^{1-1},\) \(
b_2=2^{2-1},\ldots,\) \(b_n=2^{n-1}.
\)
Then equality \eqref{gaplll} holds 
for all $p \in X$ and $i \in \{0,\ldots,n\}$.

\item The space $(X,d)$ is a ${\bf MCD}$-space with $|X|=2^n$.
\end{enumerate}
\end{con}

The next conjecture describes the structure of representing trees of finite
homogeneous ultrametric spaces up to isomorphism of rooted trees.

\begin{con} Let $T=T(r)$ be a finite rooted tree. Then
the following statements are equivalent.
\begin{enumerate}
    \item[(i)] For every $u\in V(T)$ we have
    \(
        \delta^+_{T_{(r)}}(u)\neq 1
    \)
    and, in addition, the equality
    \[
    \operatorname{lev}_{T_{(r)}}(v_1)
        =
    \operatorname{lev}_{T_{(r)}}(v_2)
    \]
    implies
    \[
        \delta^+_{T_{(r)}}(v_1)
        =
        \delta^+_{T_{(r)}}(v_2)
    \]
    for all $v_1,v_2\in V(T)$.

    \item[(ii)] There exists a finite homogeneous ultrametric space
    $(X,d)$ for which the representing tree
    \(
        T_X=T_X(r_X,l_X)
    \)
    and $T=T(r)$ are isomorphic as rooted trees.
\end{enumerate}
\end{con}

\subsection*{Funding}
Oleksiy Dovgoshey was supported by grant 367319 from the Research Council of Finland.

\bibliographystyle{plain}
\bibliography{bib2020.07}

\end{document}